\newtheorem{definition}{Definition}[section]
\newtheorem{theorem}[definition]{Theorem}
\newtheorem{lemma}[definition]{Lemma}
\newtheorem{corollary}[definition]{Corollary}
\newtheorem{proposition}[definition]{Proposition}
\newcommand{\C}{\mathbb{C}}
\newcommand{\R}{\mathbb{R}}
\newcommand{\M}{\mathbb{M}}
\newcommand{\N}{\mathbb{N}}
\begin{document}

\title{The $p$-norm of some matrices}

\author{Masaru Nagisa}
\address[Masaru Nagisa]{Department of Mathematics and Informatics, Faculty of Science, Chiba University, 
Chiba, 263-8522,  Japan: \ Department of Mathematical Sciences, Ritsumeikan University, Kusatsu, Shiga, 525-8577,  Japan}
\email{nagisa@math.s.chiba-u.ac.jp}

\maketitle

\begin{abstract}
We compute the operator $p$-norm of some $n\times n$ complex matrices, which can be seen as bounded linear operators on 
the $n$ dimensional Banach space $\ell^p(n)$.
The notion of logarithmic affine matrices is defined, and for such a matrix its $p$-norm is computed exactly.
In particular, a matrix $A=\begin{pmatrix} 8 & 1 & 6 \\ 3 & 5 & 7 \\ 4 & 9 & 2 \end{pmatrix}$ which corresponds to a magic square 
belongs to the class of logarithmic affine matrices, and its $p$-norm is equal to $15$ for any $p\in [1,\infty]$.

\medskip\par\noindent
AMS subject classification: Primary 47A30, Secondary 15B05.

\medskip\par\noindent
Key words: $p$-norm, Riesz-Thorin interpolation, logarithmic affine matrix, circulant matrix, norm inequality.

\end{abstract}

%%%
%%%  section 1
%%%
\section{Introduction}
Let $A$ be an $n\times n$ matrix with complex entries.
We can regard $A$ as a bounded linear operators on the $n$ dimensional Banach space $\ell^p(n)$ $(1\le p \le\infty)$.
Our object is to compute  or to estimate the operator norm of $A$ for $p\in [1,\infty]$, that is,
\[    \|A\|_{p,p} = \max \{ \|A\xi\|_p : \xi \in \ell^p(n), \|\xi\|_p=1 \},  \]
where $\|\xi\|_p$ means the vector $p$-norm of $\xi \in \ell^p(n)$ (see Section 2).

In many cases, to compute the norm $\|A\|_{p,p}$ is difficult.
But using Riesz-Thorin theorem (see Proposition 3.1), we can get some estimation for their norms.
Put $f(p)= \|A\|_{p,p}$. 
Then, by Riesz-Thorin theorem, it follows that there exists a $p_0\in [1, \infty]$ such that $f(p)$ is decreasing on $[1,p_0]$
and increasing on $[p_0,\infty]$ (see Proposition 3.2).
Using this $p_0$, we can get the following estimation:
\[   \|A\|_{p,p} \le \begin{cases} \|A\|_{1,1}^{(p_0-p)/p(p_0-1)} \|A\|_{p_0,p_0}^{p_0(p-1)/p(p_0-1)}  & 1\le p < p_0  \\
                          \|A\|_{p_0,p_0}^{p_0/p} \|A\|_{\infty, \infty}^{(p-p_0)/p}  & p_0 \le p \le \infty \end{cases} .  \]
If $A$ is self-adjoint, then $p_0= 2$  and we can get the estimation in Corollary 3.3.                          

We introduce the notion of logarithmic affine matrix. 
We call $A\in \M_n(\C)$  logarithmic affine (in short, $A\in LA(n)$) if it satisfies the following condition:
\[    \|A\|_{p,p} = \|A\|_{1,1}^{1/p} \|A\|_{\infty,\infty}^{1-1/p} \text{ for any } p\in [1,\infty].   \]
We investigate some matrices belongs to $LA(n)$.
It is proved that, for a matrix $A$ satisfying a special condition,  $\|A\|_{p,p}$ is constant for any $p\in[1,\infty]$ (Theorem 3.5).  
As an application, it is proved that, for the following matrices corresponding to magic squares:
\[  \|\begin{pmatrix} 8 & 1 & 6 \\ 3 & 5 & 7 \\ 4 & 9 & 2 \end{pmatrix}\|_{p,p} =15 \text{ and }
     \|\begin{pmatrix} 1 & 2 & 15 & 16 \\ 13 & 14 & 3 & 4 \\ 12 & 7 & 10 & 5 \\ 8 & 11 & 6 & 9 \end{pmatrix}\|_{p,p} =34  \]
for any $p\in[1,\infty]$.
%Our method of estimation for $p$-norm is applicable for circulant matrices. 
We also estimate the $p$-norm of circulant matrices.
L. Bouthat, A. Khare, J. Mashreghi and F. Morneau-Gu\'erin \cite{Bouthat} and K. R. Sahasranand \cite{Sahasranand} studied
precise estimation of $p$-norm for some special circulant matrices.

In section 4, we consider a block matrices with the special form, and compute its $p$-norm exactly.
We remark that some matrices are not logarithmic affine, but its $p$-norm is determined exactly.

%%%
%%%  section 2
%%%
\section{Preliminaries}
Let $\xi = \begin{pmatrix} x_1 \\ \vdots \\ x_n \end{pmatrix} \in \C^n$ and 
$A=\begin{pmatrix} a_{11} & \cdots & a_{1n} \\ \vdots & \ddots & \vdots \\ a_{n1} & \cdots & a_{nn} \end{pmatrix} \in \M_n(\C)$.
We denote
\begin{gather*}
   \|\xi\|_p = \begin{cases} (\sum_{i=1}^n |x_i|^p)^{1/p}  & 1\le p<\infty \\ \max\{|x_i|: i=1,\ldots, n\} & p=\infty \end{cases}  \\
\intertext{and}
   \|A\|_{p,q} =\sup\{\|A\xi\|_q : \|\xi\|_p=1 \}=\sup \{ \frac{\|A\xi\|_q}{\|\xi\|_p}: \xi\neq 0\},  
\end{gather*}
where $1\le p, q\le \infty$.

We denote the Banach space $\C^n$ with the norm $\|\cdot\|_p$ by $\ell^p(n)$ $(1\le p \le \infty)$. 
It is known that the dual space of $\ell^p(n)$  $(1\le p <\infty)$ can be identified with $\ell^q(n)$ by the following way:
\[   \langle \xi, \eta \rangle = \sum_{i=1}^n x_i \bar{y_i} , \]
where $\dfrac{1}{p}+\dfrac{1}{q}=1$,  $\xi=\begin{pmatrix} x_1 \\ \vdots \\ x_n \end{pmatrix}\in \ell^p(n)$ and
$\eta=\begin{pmatrix} y_1 \\ \vdots \\y_n \end{pmatrix} \in \ell^{q}(n)$ and $\bar{y}$ means the conjugate of $y$,
and the dual norm $\|\cdot\|_q$ can be defined as follows:
\[  \|\eta\|_q = \max_{\xi \neq 0} \frac{ {\rm Re} \langle \xi, \eta \rangle}{\| \xi\|_p} 
                    = \max_{\xi \neq 0} \frac{ |\langle \xi, \eta \rangle|}{\| \xi\|_p} . \]

For a linear map $A$ from $\ell^{p_1}(n)$ to $\ell^{p_2}(n)$, we can consider the dual linear map $A^*$ from $\ell^{q_2}(n)$ to $\ell^{q_1}(n)$ as follows:
\[    \langle A \xi, \eta \rangle = \langle \xi, A^*\eta\rangle \quad \xi\in \ell^{p_1}(n),  \eta \in \ell^{q_2}(n),  \]
where $\frac{1}{p_1}+\frac{1}{q_1}=1$ and $\frac{1}{p_2}+\frac{1}{q_2}=1$.
By the calculation
\[   \langle A\xi, \eta \rangle = \sum_{i,j=1}^n a_{ij}x_j\bar{y_i} =\sum_{j=1}^n x_j \overline{(\sum_{i=1}^n \bar{a_{ij}}y_i)} =\langle \xi, \overline{{}^tA}\eta \rangle , \]
we have $A^* =\overline{ {}^tA}= \begin{pmatrix} \bar{a_{11}} & \cdots & \bar{a_{n1}} \\ \vdots & \ddots & \vdots \\ \bar{a_{1n}} & \cdots & \bar{a_{nn}} \end{pmatrix}$ .  
Then we also have
\begin{align*}
   \|A\|_{p_1, p_2} & = \max\{ |\langle A\xi, \eta \rangle| : \|\xi\|_{p_1}=1, \|\eta\|_{q_2}=1  \}  \\
       & = \max\{ |\langle \xi, A^* \eta \rangle | : \|\xi\|_{p_1}=1, \|\eta\|_{q_2}=1  \}  \\
       & = \|A^*\|_{q_2, q_1}.
\end{align*}

%
%  Lemma 2.1
%                    
\begin{lemma}\label{lem:inequality}
Let $1\le p_1 \le p_2 \le \infty$.
For any $\xi \in \C^n$, we have
\[  \|\xi\|_{p_2} \le \|\xi\|_{p_1} \le n^{\frac{1}{p_1}-\frac{1}{p_2}}\|\xi\|_{p_2}.  \]
In particular, for any $A\in \M_n(\C)\cong B(\C^n)$,
\[   \|A\|_{p.p} \le n^{|\frac{1}{2}-\frac{1}{p}|}\|A\|_{2,2} .  \]
\end{lemma}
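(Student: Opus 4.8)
The plan is to establish the two scalar inequalities first and then feed them into the operator estimate. For the left inequality $\|\xi\|_{p_2}\le\|\xi\|_{p_1}$, I would reduce by homogeneity to the case $\|\xi\|_{p_1}=1$; when $p_2<\infty$ every coordinate then satisfies $|x_i|\le 1$, so $|x_i|^{p_2}\le|x_i|^{p_1}$ because $p_2\ge p_1$, and summing over $i$ gives $\|\xi\|_{p_2}^{p_2}\le\|\xi\|_{p_1}^{p_1}=1$; the case $p_2=\infty$ is immediate since $\max_i|x_i|\le 1$. For the right inequality $\|\xi\|_{p_1}\le n^{1/p_1-1/p_2}\|\xi\|_{p_2}$, when $p_1<p_2<\infty$ I would apply H\"older's inequality to the product $\sum_i|x_i|^{p_1}\cdot 1$ with the conjugate exponents $p_2/p_1$ and $p_2/(p_2-p_1)$, obtaining $\sum_i|x_i|^{p_1}\le(\sum_i|x_i|^{p_2})^{p_1/p_2}\,n^{(p_2-p_1)/p_2}$, and taking $p_1$-th roots yields the claim; the degenerate case $p_1=p_2$ is trivial, and the case $p_2=\infty$ follows from the direct count $\sum_i|x_i|^{p_1}\le n\|\xi\|_\infty^{p_1}$, with the convention that $n^{1/p_1-1/p_2}$ is read as $n^{1/p_1}$.

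For the ``in particular'' statement I would split according to whether $p\ge 2$ or $p\le 2$, using each of the two inequalities above exactly once, in complementary roles. If $p\ge 2$, take $\xi$ with $\|\xi\|_p=1$; then $\|\xi\|_2\le n^{1/2-1/p}$ by the right inequality (with $p_1=2$, $p_2=p$), while $\|A\xi\|_p\le\|A\xi\|_2$ by the left inequality (same exponents), so $\|A\xi\|_p\le\|A\|_{2,2}\|\xi\|_2\le n^{1/2-1/p}\|A\|_{2,2}$, and taking the supremum over such $\xi$ gives $\|A\|_{p,p}\le n^{1/2-1/p}\|A\|_{2,2}=n^{|1/2-1/p|}\|A\|_{2,2}$. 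If $p\le 2$, the same argument runs with the two inequalities exchanged: $\|\xi\|_2\le\|\xi\|_p=1$ from the left inequality (with $p_1=p$, $p_2=2$) and $\|A\xi\|_p\le n^{1/p-1/2}\|A\xi\|_2$ from the right inequality, giving $\|A\|_{p,p}\le n^{1/p-1/2}\|A\|_{2,2}=n^{|1/2-1/p|}\|A\|_{2,2}$.

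There is essentially no deep obstacle here; the content is the standard comparison of $\ell^p$-norms on a finite-dimensional space, together with the optimality of the exponent pair in H\"older's inequality. The only points demanding a little care are the bookkeeping at the endpoint $p_2=\infty$ and, in the operator estimate, making sure the two scalar inequalities are invoked with the correct monotonicity direction, i.e.\ enlarging the norm of $\xi$ on the domain side while shrinking the norm of $A\xi$ on the range side (and the reverse when $p\le 2$), so that the two powers of $n$ collapse into the single factor $n^{|1/2-1/p|}$.
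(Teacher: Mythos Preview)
Your proof is correct and follows essentially the same approach as the paper: the scalar inequalities are proved by the same normalization/comparison argument for $\|\xi\|_{p_2}\le\|\xi\|_{p_1}$ and the same H\"older step with exponents $p_2/p_1$ and $p_2/(p_2-p_1)$ for the reverse, and the operator estimate is obtained by sandwiching $A$ between two applications of the scalar bounds. The only cosmetic difference is that the paper packages the last step as the factorization $\|A\|_{p,p}=\|I_nAI_n\|_{p,p}\le\|I_n\|_{2,p}\,\|A\|_{2,2}\,\|I_n\|_{p,2}$, whereas you unfold the identity-map norms directly on a test vector.
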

\begin{proof}
Let $\xi = \begin{pmatrix} x_1 \\ \vdots \\ x_n \end{pmatrix} (\neq 0) \in \C^n$. 
Then we have
\begin{align*}
  \frac{\|\xi\|_{p_2}}{\|\xi\|_{p_1}} & = (\sum_{i=1}^n  (\frac{|x_i|}{ \sum_{j=1}^n |x_j|^{p_1} )^{1/p_1} } )^{p_2})^{1/p_2}
     = ( \sum_{i=1}^n ( \frac{|x_i|^{p_1} }{\sum_{j=1}^n |x_j|^{p_1} } )^{p_2/p_1} )^{1/p_2}  \\
     &  \le ( \sum_{i=1}^n \frac{|x_i|^{p_1}}{\sum_{j=1}^n |x_j|^{p_1} } )^{1/p_2} =1,
\end{align*}
and
\begin{align*}
  \|\xi\|_{p_1}^{p_1} & = \sum_{i=1}^n |x_i|^{p_1} \le (\sum_{i=1}^n (|x_i|^{p_1})^{p_2/p_1} )^{p_1/p_2} ( \sum_{i=1}^n (1^{p_2/(p_2-p_1)} ))^{(p_2-p_1)/p_2}  \\
    & \le n^{(p_2-p_1)/p_2} \|\xi\|_{p_2}^{p_1}.
\end{align*}
So we can get the desired inequalities.

We denote the unit matrix of $\M_n(\C)$ by $I_n$, which can be seen as the identity map on $\C^n$.
Using this inequalities,
\[ \|A\|_{p,p} = \|I_n A I_n\|_{p,p} \le \|I_n\|_{2,p} \|A\|_{2,2} \|I_n\|_{p,2} = \begin{cases} n^{\frac{1}{p}-\frac{1}{2}} \|A\|_{2,2} &  (p<2) \\
    n^{\frac{1}{2}-\frac{1}{p}} \|A\|_{2,2} & (p\ge2) \end{cases}. \] 
\end{proof}

For $p\in [1,\infty]$,  $A\in \M_n(\C)$ is called  a $p$-isometry if it holds
\[    \|A\xi \|_p = \|\xi\|_p  \quad \text{for any } \xi \in \C^n.  \]
We call $S=(s_{ij}) \in M_n(\C)$ a unitary permutation if there exist a permutation $\sigma$ on $\{1,2,\ldots, n\}$ and 
$\alpha_i\in \C$ with $|\alpha_i|=1$ ($1\le i \le n$) such that
\[  s_{ij} = \begin{cases} \alpha_i & \text{ if } j=\sigma(i) \\ 0 & \text{ if } j\neq \sigma(i) \end{cases}.  \]

%
%  Lemma 2.2
%
\begin{lemma}\label{lem:isometry}
\begin{enumerate}
\item[$(1)$] If $S$ is a unitary permutation, then $S$ is $p$-isometry for any $p \in [1,\infty]$.
\item[$(2)$] If $S$ is a $p$-isometry for some $p\in [1,\infty] \setminus \{2\}$, then $S$ is a unitary permutation. 
\end{enumerate}
\end{lemma}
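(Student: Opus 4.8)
For part (1), I would verify directly from the definition. If $S$ is a unitary permutation associated with $\sigma$ and unimodular scalars $\alpha_i$, then for $\xi=(x_1,\dots,x_n)^t$ the $i$-th coordinate of $S\xi$ is $\alpha_i x_{\sigma(i)}$, so $|(S\xi)_i|=|x_{\sigma(i)}|$. Since $\sigma$ is a bijection, the multiset $\{|x_{\sigma(i)}|\}_i$ equals $\{|x_j|\}_j$, and hence $\|S\xi\|_p^p=\sum_i|x_{\sigma(i)}|^p=\sum_j|x_j|^p=\|\xi\|_p^p$ for $p<\infty$, and $\max_i|x_{\sigma(i)}|=\max_j|x_j|$ for $p=\infty$. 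This gives the claim for all $p\in[1,\infty]$.

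For part (2), the strategy is to exploit the strict convexity of the $\ell^p$-norm for $p\in(1,\infty)\setminus\{2\}$ (and a separate elementary argument for $p=1$ and $p=\infty$) to force each row of $S$ to be supported on a single entry. First I would treat $p\in(1,\infty)$, $p\neq 2$. Apply the isometry hypothesis to each standard basis vector $e_j$: then each column of $S$ is a unit vector in $\ell^p$. Now the key point is that an $\ell^p$-isometry must preserve the extreme-point structure of the unit ball. For $p\neq 2$ the extreme points of the unit ball of $\ell^p(n)$ are exactly the vectors of the form $\lambda e_j$ with $|\lambda|=1$ when $p=\infty$... no — I must be careful: for $1<p<\infty$ the unit ball is strictly convex, so *every* boundary point is extreme, which is not restrictive enough. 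The right tool is instead the following: a strictly convex isometry (surjective, which here follows since $S$ is invertible — a $p$-isometry is injective, hence bijective on $\C^n$) is characterized by its behavior on the "facial" structure, and for $\ell^p$ with $p\neq 2$ one uses an equality case in the triangle inequality or in Clarkson's / Hölder's inequality. Concretely, I would use: for $p\neq 2$, equality $\|x+y\|_p^p+\|x-y\|_p^p = 2^{p-1}(\|x\|_p^p+\|y\|_p^p)$ fails unless $x,y$ have disjoint supports (or one is zero); more usefully, the map $t\mapsto\|Se_j + tSe_k\|_p$ must coincide with $t\mapsto\|e_j+te_k\|_p=(1+|t|^p)^{1/p}$ for all scalars $t$, and expanding $\|Se_j+tSe_k\|_p^p=\sum_i|s_{ij}+ts_{ik}|^p$ as a function of $t$ near $0$ and comparing derivatives (or using that $(1+|t|^p)^{1/p}$ is *not* twice differentiable at $0$ when $p<2$, is $C^\infty$ when $p>2$) forces, coordinate by coordinate, that $s_{ij}$ and $s_{ik}$ cannot both be nonzero. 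Thus every row of $S$ has at most one nonzero entry; combined with $S$ invertible and columns of unit $\ell^p$-norm, this yields a permutation matrix scaled by unimodular constants.

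The cases $p=1$ and $p=\infty$ I would handle by a cleaner combinatorial argument. For $p=\infty$: $S$ maps the unit cube onto itself bijectively; the vertices $(\pm1,\dots,\pm1)$ are exactly the $2^n$ extreme points, so $S$ permutes them, and a linear map permuting the vertices of the cube must be a signed permutation. For $p=1$: pass to the adjoint $S^*=\overline{{}^tS}$, which by the duality $\|S\|_{1,1}=\|S^*\|_{\infty,\infty}$ is an $\infty$-isometry, reduce to the previous case, and transpose back. I would present $p=1$ and $p=\infty$ first as the easy anchor cases, then the main case $1<p<\infty$, $p\neq 2$.

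The main obstacle is the case $1<p<\infty$ with $p\neq 2$: making the "equality in Hölder forces disjoint supports" step fully rigorous. The cleanest route I expect to use is the differentiability dichotomy of $t\mapsto(1+|t|^p)^{1/p}$ at $t=0$ versus the analyticity of $\sum_i|s_{ij}+ts_{ik}|^p$ in a neighborhood where no $s_{ij}+ts_{ik}$ vanishes — one must also rule out cancellations $s_{ij}+ts_{ik}=0$ occurring, which happens only at isolated $t$, so smoothness elsewhere still constrains the function; matching this against the non-smooth model function pins down the support condition. An alternative, perhaps smoother to write, is to invoke a known Banach–Mazur-type rigidity theorem for $\ell^p$ isometries (Lamperti's theorem), but since the paper is elementary I would prefer the self-contained derivative argument.
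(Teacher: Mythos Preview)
Your plan for part~(1) is exactly the paper's computation.

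For part~(2) your route is viable but far more elaborate than the paper's. The paper also starts from duality, but uses it once and for all rather than just to reduce $p=1$ to $p=\infty$: if $S$ is a $p$-isometry then $S^*$ is a $q$-isometry ($1/p+1/q=1$), so one may assume $1\le p<2<q\le\infty$. Applying $S$ to each $e_j$ gives $\sum_i|s_{ij}|^p=1$, hence $|s_{ij}|\le1$ and $\sum_{i,j}|s_{ij}|^p=n$; applying $S^*$ to each $e_i$ gives $\sum_j|s_{ij}|^q=1$, hence $\sum_{i,j}|s_{ij}|^q=n$. Since $|s_{ij}|\le1$ and $p<q$ imply $|s_{ij}|^p\ge|s_{ij}|^q$ termwise, equality of the two double sums forces $|s_{ij}|\in\{0,1\}$ for every $i,j$, and the unit-norm constraints then make $S$ a unitary permutation. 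This single counting trick handles all $p\neq2$ uniformly, with no case distinctions.

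Your differentiability/Lamperti strategy works, but as you yourself flag, it is heavier: you need separate arguments for $p<2$ and $p>2$, and when $p>2$ is an even integer the function $t\mapsto|t|^p$ is a polynomial, so the smoothness dichotomy you rely on collapses and you must first pass to the conjugate exponent. Also, your treatment of $p=\infty$ is phrased for the real cube (``$2^n$ extreme points'', ``signed permutation''); over $\C$ the extreme points of the $\ell^\infty$ ball form a torus, so that step needs a different argument. None of this is fatal, but the paper's double-counting avoids all of these complications in five lines.
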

\begin{proof}
(1) Since
\[  \|S\xi\|_p = (\sum_{i=1}^n |\sum_{j=1}^n s_{ij}x_j|^p)^{1/p}=  (\sum_{i=1}^n |\alpha_i x_{\sigma(i)}|^p )^{1/p} = (\sum_{i=1}^n |x_i|^p)^{1/p} = \|\xi\|_p, \]  
$S$ is a $p$-isometry.  

(2) If $S$ is a $p$-isometry, then $S^*$ is a $q$-isometry ($1/p +1/q=1$). 
So we may assume that $1\le p<2<q \le \infty$.

Let $e_j\in \C^n$ be a vector with the $j$-th component is one and the other components are zero.
Since 
$S e_j =\begin{pmatrix} s_{11} & \cdots & s_{1n} \\ \vdots & \ddots & \vdots \\ s_{n1} & \cdots & s_{nn} \end{pmatrix}
          \begin{pmatrix} 0\\ \vdots \\ 1 \\ \vdots \\ 0 \end{pmatrix} = \begin{pmatrix} s_{1j} \\ \vdots \\ s_{nj} \end{pmatrix}$, 
we have $\sum_{i=1}^n|s_{ij}|^p =1$ for any $i=1,2,\ldots, n$.
This implies
\[   |s_{ij}|\le 1 \text{ and } \sum_{i,j=1}^n |s_{ij}|^p = n .\]
Since $S^*$ is a $q$-isometry, we also have
\[   \sum_{i=1}^n|s_{ij}|^{q}=1  \text{ and } \sum_{i,j=1}^n |s_{ij}|^{q} = n .  \]
Then
\[     n= \sum_{i,j=1}^n |s_{ij}|^p \le \sum_{i,j=1}^n |s_{ij}|^{q} = n  \]
implies $|s_{ij}|=0$ or $1$.
So each row vector and column vector of $S$ contain only one non-zero component and its modulus is one.
\end{proof}

\vspace{5mm}

%
%  Lemma 2.3
%
\begin{lemma}\label{lem:boundaries}
For $A=(a_{ij}) \in \M_n(\C)$, we have
\[  \|A\|_{1,1} = \max_{1\le j \le n} \sum_{i=1}^n |a_{ij}| \text{ and }
    \|A\|_{\infty,\infty} = \max_{1\le i \le n} \sum_{j=1}^n |a_{ij}| .  \]
\end{lemma}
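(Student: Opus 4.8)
The plan is to compute the two norms directly from the definition of the induced operator norm, exploiting that the $\ell^1$ and $\ell^\infty$ norms behave well with respect to coordinates. For the $\ell^\infty$ case, note that for $\xi$ with $\|\xi\|_\infty = 1$ we have $|x_j| \le 1$ for all $j$, so for each row $i$ the triangle inequality gives $|(A\xi)_i| = |\sum_j a_{ij} x_j| \le \sum_j |a_{ij}|$, hence $\|A\xi\|_\infty \le \max_i \sum_j |a_{ij}|$; this proves the $\le$ direction. For the reverse inequality, pick a row $i_0$ attaining the maximum $\max_i \sum_j |a_{ij}|$ and choose $\xi$ with $x_j$ a unimodular scalar aligning the phases of $a_{i_0 j}$, i.e. $x_j = \overline{a_{i_0 j}}/|a_{i_0 j}|$ when $a_{i_0 j} \ne 0$ and $x_j = 1$ otherwise; then $\|\xi\|_\infty = 1$ and $(A\xi)_{i_0} = \sum_j |a_{i_0 j}|$, so $\|A\xi\|_\infty \ge \sum_j |a_{i_0 j}| = \max_i \sum_j |a_{ij}|$.

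For the $\ell^1$ case, I would argue that for $\|\xi\|_1 = 1$,
\[
  \|A\xi\|_1 = \sum_i \Bigl| \sum_j a_{ij} x_j \Bigr| \le \sum_i \sum_j |a_{ij}| |x_j| = \sum_j |x_j| \Bigl( \sum_i |a_{ij}| \Bigr) \le \Bigl( \max_j \sum_i |a_{ij}| \Bigr) \sum_j |x_j| = \max_j \sum_i |a_{ij}|,
\]
which gives $\le$. For the reverse, let $j_0$ attain $\max_j \sum_i |a_{ij}|$ and take $\xi = e_{j_0}$; then $\|\xi\|_1 = 1$ and $\|A\xi\|_1 = \|A e_{j_0}\|_1 = \sum_i |a_{i j_0}| = \max_j \sum_i |a_{ij}|$.

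Alternatively, and perhaps more cleanly, I would deduce the $\ell^1$ formula from the $\ell^\infty$ formula via the duality already established in the preliminaries: $\|A\|_{1,1} = \|A^*\|_{\infty,\infty}$, where $A^* = \overline{{}^tA}$ has entries $\overline{a_{ji}}$, so applying the $\ell^\infty$ result to $A^*$ gives $\|A\|_{1,1} = \max_i \sum_j |\overline{a_{ji}}| = \max_i \sum_j |a_{ji}| = \max_j \sum_i |a_{ij}|$. There is no real obstacle here; the only point requiring a little care is the choice of the extremal vector in the $\ell^\infty$ case (handling zero entries of the chosen row correctly) and making sure the duality identity is invoked with the indices matched properly. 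I would present the $\ell^\infty$ computation in full and then obtain $\|A\|_{1,1}$ by duality.
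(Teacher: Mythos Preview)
Your proof is correct and follows essentially the same strategy as the paper: establish one of the two norms by a direct computation (upper bound via the triangle inequality, lower bound via an explicit extremal vector) and then obtain the other by the duality $\|A\|_{p,p}=\|A^*\|_{q,q}$. The only difference is that you compute $\|A\|_{\infty,\infty}$ directly and deduce $\|A\|_{1,1}$ by duality, whereas the paper does it the other way around; your choice is arguably cleaner, since the extremal vector $e_{j_0}$ for the $\ell^1$ case (which you also supply) is simpler than the phase-aligned vector needed for $\ell^\infty$.
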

\begin{proof}
Since
\begin{align*}
  \|A\xi\|_1 & = \sum_{i=1}^n | \sum_{j=1}^n a_{ij}x_j| \le \sum_{i,j=1}^n |a_{ij}| |x_j| = \sum_{j=1}^n(\sum_{i=1}^n|a_{ij}|)|x_j| \\
           & \le  \sum_{j=1}^n (\max_{1\le j \le n} \sum_{i=1}^n |a_{ij}| ) |x_i| \le  (\max_{1\le j \le n} \sum_{i=1}^n|a_{ij}|) \|\xi\|_1,  
\end{align*}
we have $\|A\|_{1,1} \le \max_{1\le j \le n} \sum_{i=1}^n |a_{ij}|$.

When $ \max_{1\le j \le n} \sum_{i=1}^n |a_{ij}|= \sum_{i=1}^n |a_{ij_0}|$, we choose $\eta={}^t(y_1,y_2,\ldots,y_n)$ as follows:
\[   y_i = \begin{cases} \bar{a_{ij_0}}/|a_{ij_0}| &  \text{ if } a_{ij_0}\neq 0  \\  0  & \text{ if } a_{ij_0} =0 \end{cases}.  \]
Then we have $\|A\eta\|_1 = (\sum_{i=1}^n |a_{ij_0}|) \|\eta\|_1$ and 
\[    \|A\|_{1,1} = \sum_{i=1}^n |a_{ij_0}| =\max_{1\le j \le n}\sum_{i=1}^n |a_{ij}|.   \]

Since $\ell^\infty(n)$ is the dual space of $\ell^1(n)$, we have
\[  \|A\|_{\infty,\infty} = \|A^*\|_{1,1} = \max_{1\le i \le n}\sum_{j=1}^n |a_{ij}| .  \]
\end{proof}

%
%  Lemma 2.4
%
\begin{lemma}\label{lem:eigenvalue}
Let $\lambda \in \C$, $\xi\in \C^n\setminus \{0\}$,  $A\in \M_n(\C)$ and a unitary permutation $S$ satisfy $A\xi = \lambda S\xi$.
Then we have  $\|A\|_{p,p} \ge |\lambda|$ for any $p\in[1,\infty]$.
\end{lemma}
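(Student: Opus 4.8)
The plan is to plug the distinguished vector $\xi$ into the definition of the operator norm and to exploit that a unitary permutation preserves \emph{every} $\ell^p$-norm. By Lemma~\ref{lem:isometry}(1), the unitary permutation $S$ is a $p$-isometry for each $p\in[1,\infty]$, so $\|S\eta\|_p=\|\eta\|_p$ for all $\eta\in\C^n$; in particular $\|S\xi\|_p=\|\xi\|_p$. Combining this with the homogeneity of the norm and the hypothesis $A\xi=\lambda S\xi$ gives
\[
  \|A\xi\|_p=\|\lambda S\xi\|_p=|\lambda|\,\|S\xi\|_p=|\lambda|\,\|\xi\|_p .
\]
Since $\xi\neq 0$ we have $\|\xi\|_p>0$, so dividing yields
\[
  \|A\|_{p,p}=\sup_{\eta\neq 0}\frac{\|A\eta\|_p}{\|\eta\|_p}\ \ge\ \frac{\|A\xi\|_p}{\|\xi\|_p}=|\lambda| ,
\]
which is the assertion.

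There is no real obstacle here: the only non-formal input is the $p$-isometry property of unitary permutations, already supplied by Lemma~\ref{lem:isometry}(1), and everything else is the definition of $\|\cdot\|_{p,p}$ together with scalar homogeneity. The content of the lemma is conceptual rather than computational: when $S=I_n$ it reduces to the familiar observation that every eigenvalue of $A$ bounds $\|A\|_{p,p}$ from below, and the general statement relaxes the condition ``$\xi$ is an eigenvector of $A$'' to ``$A\xi$ is a unitary-permutation rescaling of $\xi$''. This more flexible version is exactly what is needed later to produce lower bounds matching $\|A\|_{1,1}=\|A\|_{\infty,\infty}$ for the magic-square matrices, where applying $A$ to a suitable test vector returns precisely such a permuted multiple of that vector.
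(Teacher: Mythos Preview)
Your proof is correct and follows essentially the same approach as the paper: compute $\|A\xi\|_p=\|\lambda S\xi\|_p=|\lambda|\,\|S\xi\|_p=|\lambda|\,\|\xi\|_p$ using the $p$-isometry property of $S$ from Lemma~\ref{lem:isometry}(1), and conclude $\|A\|_{p,p}\ge|\lambda|$. The additional commentary about the role of the lemma is accurate but not part of the paper's proof.
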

\begin{proof}
Since
\[  \|A\xi\|_p = \|\lambda S \xi\|_p =|\lambda| \|S\xi\|_p =|\lambda| \|\xi\|_p,  \]
$\|A\|_{p,p}\ge |\lambda|$.
\end{proof}

%%%%%
%%%%%  section 3
%%%%%
\section{Logarithmic affine matrices}

The following statement is well-known as Riesz-Thorin theorem (see \cite{Bergh}):

%
%  Proposition 3.1
%
\begin{proposition}\label{prop:riesz}
Let $1\le p, q, p_i, q_i \le \infty$ $(i=1,2)$ and $\theta\in [0,1]$ with
\[  \frac{1}{p} = \frac{1-\theta}{p_1}+\frac{\theta}{p_2}  \text{ and }
    \frac{1}{q} = \frac{1-\theta}{q_1}+\frac{\theta}{q_2}.  \]
Then we have
\[   \|A\|_{p,q} \le \|A\|_{p_1,q_1}^{1-\theta} \|A\|_{p_2,q_2}^\theta  .  \]
\end{proposition}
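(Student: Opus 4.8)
The plan is to deduce the inequality from the Hadamard three-lines lemma via the standard complex-interpolation device of Thorin. We may assume $\theta\in(0,1)$, the endpoints being trivial. Let $q'$ be the exponent conjugate to $q$; by the duality computation in Section 2,
\[
  \|A\|_{p,q} = \max\{\, |\la A\xi,\eta\ra| : \|\xi\|_p = 1,\ \|\eta\|_{q'} = 1 \,\},
\]
so it suffices to fix unit vectors $\xi = {}^t(x_1,\dots,x_n)$ and $\eta = {}^t(y_1,\dots,y_n)$ and prove $|\la A\xi,\eta\ra| \le \|A\|_{p_1,q_1}^{1-\theta}\|A\|_{p_2,q_2}^{\theta}$. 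Write $x_j = |x_j|e^{i\alpha_j}$, $y_i = |y_i|e^{i\beta_i}$, and for $z$ in the closed strip $\{0 \le \mathrm{Re}\,z \le 1\}$ let $p(z), q'(z)$ be defined by $\tfrac{1}{p(z)} = \tfrac{1-z}{p_1} + \tfrac{z}{p_2}$ and $\tfrac{1}{q'(z)} = \tfrac{1-z}{q_1'} + \tfrac{z}{q_2'}$. With the convention $0^0 = 0$, put
\[
  x_j(z) = |x_j|^{\,p/p(z)} e^{i\alpha_j}, \qquad
  w_i(z) = |y_i|^{\,q'/q'(z)} e^{-i\beta_i}, \qquad
  F(z) = \sum_{i,j=1}^{n} a_{ij}\, x_j(z)\, w_i(z).
\]
Since $n$ is finite, each $x_j(z)$ and $w_i(z)$ is of the form $(\text{constant})^{(\text{affine in }z)}$, so $F$ is an entire function, bounded on the strip, and $F(\theta) = \la A\xi,\eta\ra$ because $p(\theta) = p$, $q'(\theta) = q'$.

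Next I would estimate $F$ on the two edges of the strip. On $\mathrm{Re}\,z = 0$ the coefficient of $z$ in $p/p(z)$ is real, so $|x_j(it)| = |x_j|^{p/p_1}$ and hence $\|\xi(it)\|_{p_1} = 1$; likewise $\|w(it)\|_{q_1'} = 1$, where $w(it)$ has entries $w_i(it)$. Writing $\eta(it)$ for the vector with entries $\overline{w_i(it)}$, we get $F(it) = \la A\xi(it),\eta(it)\ra$, whence by Hölder's inequality for the pairing and the definition of $\|A\|_{p_1,q_1}$,
\[
  |F(it)| \le \|A\xi(it)\|_{q_1}\,\|\eta(it)\|_{q_1'} \le \|A\|_{p_1,q_1}\,\|\xi(it)\|_{p_1}\,\|w(it)\|_{q_1'} = \|A\|_{p_1,q_1}.
\]
The identical argument on $\mathrm{Re}\,z = 1$ gives $|F(1+it)| \le \|A\|_{p_2,q_2}$ for all $t$. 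The three-lines lemma --- if $F$ is bounded and continuous on $\{0 \le \mathrm{Re}\,z \le 1\}$, holomorphic inside, with $|F(it)| \le M_0$ and $|F(1+it)| \le M_1$, then $|F(\theta)| \le M_0^{1-\theta}M_1^{\theta}$ --- now yields $|\la A\xi,\eta\ra| = |F(\theta)| \le \|A\|_{p_1,q_1}^{1-\theta}\|A\|_{p_2,q_2}^{\theta}$, and taking the supremum over the unit vectors $\xi,\eta$ completes the proof.

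I expect the obstacles to be two. First, the bookkeeping of degenerate cases: when $p_1 = p_2$ or $q_1 = q_2$ the relevant exponent is constant; and when an endpoint exponent equals $1$ or $\infty$ the conjugate exponent and the $\ell^\infty$-normalisation must be read off by the limiting convention. This is precisely where $0^0 = 0$ matters --- it guarantees $x_j(z) = 0$ whenever $x_j = 0$, so that $\|\xi(it)\|_{p_1}$ still equals $1$ (using $\xi \ne 0$) even when $p_1 = \infty$. Second, the three-lines lemma itself is the only genuinely analytic input: one proves it by applying the maximum modulus principle to $F(z)\,M_0^{z-1}M_1^{-z}\,e^{\varepsilon(z^2 - z)}$ on a tall rectangle and letting $\varepsilon \to 0$, the Gaussian-type factor being needed to tame $F$ as $\mathrm{Im}\,z \to \pm\infty$, and replacing $M_0,M_1$ by $M_0+\varepsilon,M_1+\varepsilon$ to avoid division by zero. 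Since the paper already cites \cite{Bergh}, one may alternatively invoke the three-lines lemma as known and present only the interpolation construction above.
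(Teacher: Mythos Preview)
Your argument is correct: this is exactly the classical Thorin proof via the three-lines lemma, and your handling of the degenerate exponents (in particular the $0^0=0$ convention when an endpoint is $\infty$) is the right bookkeeping. There is nothing to compare against, however, because the paper does not prove Proposition~\ref{prop:riesz} at all --- it is stated as ``well-known as Riesz-Thorin theorem (see \cite{Bergh})'' and used as a black box. So your write-up supplies the proof that the paper deliberately omits; if anything, the last paragraph anticipating the objection that one may simply cite \cite{Bergh} is precisely what the paper does.
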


\vspace{5mm}

When $p_1=q_1=1$ and $p_2=q_2=\infty$, we have
\[   \|A\|_{p,p} \le \|A\|_{1,1}^{1/p} \|A\|_{\infty,\infty}^{1-1/p} = \|A\|_{1,1} ( \frac{\|A\|_{\infty,\infty}}{\|A\|_{1,1}})^{1-1/p} 
        = \|A\|_{\infty,\infty}(\frac{\|A\|_{1,1}}{\|A\|_{\infty, \infty}})^{1/p}  \]
by $\frac{1}{p} = \frac{1/p}{1} + \frac{1-1/p}{\infty}$.

%
%  Proposition 3.2
%
\begin{proposition}\label{prop:key}
For any $A\in \M_n(\C)$, we define a function $f$ on $[1,\infty]$ by $f(p) = \|A\|_{p,p}$  $(p\in [1,\infty])$.
Then it  satisfies the following properties:
\begin{enumerate}
\item[$(1)$]  $f$ is continuous on $[1,\infty]$.
\item[$(2)$]  there exists a $p_0\in [1,\infty]$ such that $f$ is decreasing  on $[1,p_0]$ and increasing on  $[p_0, \infty]$.
Moreover $p<p_0$ $($respectively $p>p_0$ $)$ and $f(p)>f(p_0)$ imply $f(p)> f(q)$ for any $q\in (p,p_0)$  
$($respectively $q\in (p_0, p) $ $)$. 
\item[$(3)$]  For $1\le p<q_0<r\le \infty$, if it holds
\[   f(q_0)^{1/p-1/r}=f(p)^{1/q_0-1/r}f(r)^{1/p-1/q_0},    \]
then
\[   f(q)^{1/p-1/r}=f(p)^{1/q-1/r}f(r)^{1/p-1/q}    \]
for any $q\in [p,r]$.
In particular, $f(p)=f(q_0)=f(r)$ implies $f(p)=f(q)$ for any $q\in[p,r]$. 
\end{enumerate}
\end{proposition}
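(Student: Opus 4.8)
The plan is to build everything on top of the Riesz--Thorin inequality (Proposition~\ref{prop:riesz}) together with a careful analysis of when that inequality is an equality. First I would record the basic consequence of Proposition~\ref{prop:riesz} in logarithmic form: writing $g(t) = \log f(1/t)$ for $t \in [0,1]$ (so $t = 1/p$), Riesz--Thorin says precisely that $g$ is convex on $[0,1]$. Indeed, if $1/p = (1-\theta)/p_1 + \theta/p_2$ then $\|A\|_{p,p} \le \|A\|_{p_1,p_1}^{1-\theta}\|A\|_{p_2,p_2}^{\theta}$ becomes $g((1-\theta)t_1 + \theta t_2) \le (1-\theta)g(t_1) + \theta g(t_2)$. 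This single observation is the engine for all three parts, so I would state it first as the organising principle.

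For part (1), continuity of $f$ on $[1,\infty]$: a finite convex function on an interval is automatically continuous on the interior, so $g$ is continuous on $(0,1)$, hence $f$ is continuous on $(1,\infty)$. Continuity at the endpoints $p=1$ and $p=\infty$ needs a separate small argument; here I would invoke Lemma~\ref{lem:inequality}, which gives $\|A\|_{p,p} \le n^{|1/2 - 1/p|}\|A\|_{2,2}$ and, combined with the reverse bound $\|A\|_{p,p}\ge n^{-|1/2-1/p|}\|A\|_{2,2}$ obtained the same way, squeezes $f(p)$ between quantities converging to $f(1)$ and $f(\infty)$ respectively as $p\to 1^+$ and $p\to\infty$ --- alternatively one can just note that a bounded convex function on $(0,1)$ extends continuously to $[0,1]$ and that the extension agrees with $\|A\|_{1,1}$ and $\|A\|_{\infty,\infty}$ by the explicit formulas of Lemma~\ref{lem:boundaries} and a direct limit estimate. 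For part (2), I would translate: $g$ convex on $[0,1]$ means $g$ has a minimiser (the set of minimisers is a closed subinterval; pick $t_0$ in it, set $p_0 = 1/t_0$), $g$ is nonincreasing to the left of $t_0$ and nondecreasing to the right. Undoing the substitution $p = 1/t$ (which reverses orientation) gives exactly that $f$ is decreasing on $[1,p_0]$ and increasing on $[p_0,\infty]$. The ``Moreover'' clause is the strict-monotonicity refinement: if $p < p_0$ and $f(p) > f(p_0)$, then in $t$-coordinates $g(t) > g(t_0)$ with $t > t_0$; were $g$ constant on some subinterval of $(t_0, t)$, convexity would force $g$ to be constant on all of $[t_0, t]$, contradicting $g(t) > g(t_0)$ --- so $g$ is strictly decreasing on $[t_0,t]$, i.e. $f$ strictly decreasing on the corresponding $p$-interval, which is the claim; the $p > p_0$ case is symmetric.

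For part (3), I would again pass to $t$-coordinates with $t_1 = 1/p > s = 1/q_0 > t_2 = 1/r$ and $q \mapsto u = 1/q \in [t_2, t_1]$. The hypothesis $f(q_0)^{1/p - 1/r} = f(p)^{1/q_0 - 1/r} f(r)^{1/p - 1/q_0}$, after taking logarithms, says that $g(s) = \lambda g(t_1) + (1-\lambda) g(t_2)$ where $\lambda = (s - t_2)/(t_1 - t_2)$ is the barycentric coordinate placing $s$ between $t_2$ and $t_1$ --- that is, the point $(s, g(s))$ lies \emph{on} the chord joining $(t_1, g(t_1))$ and $(t_2, g(t_2))$, not strictly below it. By convexity, a convex function that touches one of its chords at an interior point must coincide with that chord on the whole interval; hence $g$ is affine (linear) on $[t_2, t_1]$, and therefore $g(u) = \mu g(t_1) + (1-\mu)g(t_2)$ with $\mu = (u - t_2)/(t_1 - t_2)$ for \emph{every} $u \in [t_2,t_1]$. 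Exponentiating and rewriting $\mu$ in terms of $1/p, 1/q, 1/r$ recovers the stated identity $f(q)^{1/p-1/r} = f(p)^{1/q-1/r}f(r)^{1/p-1/q}$. The ``in particular'' statement is the special case where the chord is horizontal: $f(p) = f(q_0) = f(r)$ forces $g$ constant on $[t_2,t_1]$, hence $f$ constant on $[p,r]$.

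The main obstacle I anticipate is purely bookkeeping rather than conceptual: keeping the exponents straight through the change of variables $p \leftrightarrow t = 1/p$, since this map reverses order and turns the weighted-geometric-mean inequalities into honest convexity statements, and one must check that the barycentric weights $\lambda, \mu$ written in the $1/p$-notation of the proposition match $(s-t_2)/(t_1-t_2)$ after clearing denominators. The genuinely substantive input is the ``a convex function touching a chord is affine on that interval'' lemma in part (3); this is elementary (it follows from the three-slopes inequality for convex functions) but is the step that does the real work, so I would make sure to state it cleanly before applying it.
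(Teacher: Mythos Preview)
Your overall architecture is exactly the paper's: set $g(t)=\log f(1/t)$ on $[0,1]$, observe that Riesz--Thorin (Proposition~\ref{prop:riesz}) is precisely the convexity of $g$, and then read off (1)--(3) from elementary convex analysis. Parts~(2) and~(3) match the paper line for line; the paper phrases (3) via the equal-slopes identity
\[
\frac{g(p^{-1})-g(q_0^{-1})}{p^{-1}-q_0^{-1}}=\frac{g(q_0^{-1})-g(r^{-1})}{q_0^{-1}-r^{-1}},
\]
which is your ``touches the chord, hence affine on the interval''.

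One step would fail as written: your first proposal for endpoint continuity, sandwiching $f(p)$ via Lemma~\ref{lem:inequality} between $n^{-|1/2-1/p|}\|A\|_{2,2}$ and $n^{|1/2-1/p|}\|A\|_{2,2}$. As $p\to 1^+$ these bounds tend to $n^{\mp 1/2}\|A\|_{2,2}$, not to $\|A\|_{1,1}$, so there is no squeeze. Convexity of $g$ on the closed interval $[0,1]$ gives only $\limsup_{t\to 1^-}g(t)\le g(1)$ (and similarly at $t=0$); the reverse inequality needs work. Your fallback ``direct limit estimate'' is exactly what the paper does, and you should plan to carry it out rather than gesture at it: for $p\to 1^+$ take $\xi=e_{j_0}$ where $j_0$ realises the maximal column sum, so $\|A\xi\|_p=(\sum_i|a_{ij_0}|^p)^{1/p}\to\|A\|_{1,1}$; for $p\to\infty$ take $\eta$ with $|y_j|=n^{-1/p}$ and phases chosen so that $a_{i_0 j}y_j=|a_{i_0 j}|n^{-1/p}$ along the maximal row $i_0$, giving $\|A\eta\|_p\ge n^{-1/p}\|A\|_{\infty,\infty}\to\|A\|_{\infty,\infty}$. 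With that in place your proof is complete and coincides with the paper's.
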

\begin{proof}
We consider the function 
\[    g(t) = \log f(1/t) = \log \|A\|_{1/t, 1/t}  \qquad \text{for any } t\in[0, 1],   \]
where we use the notation $1/0=0^{-1}=\infty$.
For any $s, t, \theta \in[0,1]$ we put $u=(1-\theta)s + \theta t$, that is,
\[        \frac{1}{u^{-1}} = \frac{1-\theta}{s^{-1}} + \frac{\theta}{t^{-1}}  .  \] 
Using Proposition~\ref{prop:riesz}, we have
\begin{align*}
   g( (1-\theta)s+\theta t) & = g(u) = g(1/u^{-1}) = \log \|A\|_{u^{-1},u^{-1}}   \\
   & \le \log ( \|A\|_{s^{-1},s^{-1}}^{1-\theta} \|A\|_{t^{-1},t^{-1}}^\theta )  \\
   & = (1-\theta)g(s) + \theta g(t).
\end{align*}
This means $g$ is convex on $[0,1]$, so $g$ is continuous on $(0,1)$ and
\[   \lim_{t\to 0+}g(t) \le g(0)  \text{ and } \lim_{t\to 1-}g(t) \le g(1).  \]
By the convexity of $g$, there exists a $t_0\in [0, 1]$ such that g(t) is increasing on $(0,t_0)$ and decreasing on $(t_0,1)$, and
$t<t_0$ with $g(t) >g(t_0)$ implies $ g(s)<g(t)$ for all $s\in (t, t_0)$ and also $t>t_0$ with $g(t)>g(t_0)$ implies $ g(s)<g(t) $ for all $s\in (t_0,t)$.

Since $f(p) = \exp g(p^{-1})$ for $p\in[1,\infty]$, $f$ is continuous on $(1,\infty)$ and
\[  \lim_{p\to\infty} f(p) \le f(\infty)=\|A\|_{\infty,\infty} \text{ and } \lim_{p\to 1+} f(p) \le f(1)=\|A\|_{1,1}.  \]
By Lemma~\ref{lem:boundaries}, we assume that $\|A\|_{1,1} = \sum_{i=1}^n |a_{i1}|$.
We choose $\xi = {}^t(1,0,\ldots,0)$ $\in \C^n$.
Then $\|\xi\|_p=1$ and $\|A\xi\|_p = (\sum_{i=1}^n |a_{i1}|^p )^{1/p}$. 
Since $\|A \xi \|_p \to \|A\|_{1,1}$ ($p\to 1+$), 
\[    \|A\|_{1,1,} \le \lim_{p\to 1+} \|A\|_{p,p} = \lim_{p\to 1+}f(p)\le f(1)= \|A\|_{1,1}.   \]
We also assume $\|A\|_{\infty,\infty}= \sum_{j=1}^n|a_{1j}|$.
We choose $y_j\in \C$ as follows:
\[     a_{1j}y_j = |a_{1j}| n^{-1/p}  \quad \text{ for } j=1,2,\ldots,n . \]
Put $\eta = {}^t(y_1,\ldots, y_n)\in \C^n$.
Then $\|\eta\|_p=1$ and 
\[  \sum_{j=1}^n|a_{1j}| \le \|A\eta\|_p\le n^{1/p}(\sum_{j=1}^n|a_{1j}|) .  \]
Since $\|A \eta \|_p \to \|A\|_{\infty,\infty}$ ($p\to \infty$), 
\[    \|A\|_{\infty, \infty} \le \lim_{p\to \infty} \|A\|_{p,p} = \lim_{t\to \infty}f(p)\le f(\infty)= \|A\|_{\infty,\infty}.   \] 
So $f$ is continuous on $[1,\infty]$ and (1) is proved.

Put $p_0=1/t_0$. Then (2) holds.

The condition $f(q_0)^{1/p-1/r}=f(p)^{1/q_0-1/r}f(r)^{1/p-1/q_0}$ is equivalent to
\[    (p^{-1}-r^{-1}) g(q_0^{-1}) =(q_0^{-1}-r^{-1}) g(p^{-1}) + (p^{-1}-q_0^{-1}) g(r^{-1}) ,  \]
and also equivalent to
\[  \frac{g(p^{-1})-g(q_0^{-1})}{p^{-1}-q_0^{-1}} = \frac{g(q_0^{-1})-g(r^{-1})}{q_0^{-1}-r^{-1}} .  \]
By the convexity of $g$, for any $q\in (p,r)$, it follows
\[  \frac{g(p^{-1})-g(q^{-1})}{p^{-1}-q^{-1}} = \frac{g(q^{-1})-g(r^{-1})}{q^{-1}-r^{-1}} , \]
that is,
\[   f(q)^{1/p-1/r}=f(p)^{1/q-1/r}f(r)^{1/p-1/q}.  \]
So (3) is proved.
\end{proof}

%
%  Corollary 3.3
%
\begin{corollary}\label{cor:symmetry}
When $A=A^*$,  we have $\|A\|_{2,2}\le \|A\|_{p,p} \le \|A\|_{1,1} = \|A\|_{\infty,\infty}$ for any $p\in[1,\infty]$  
and the function $p \mapsto \|A\|_{p,p}$ is decreasing on $[1,2]$ and increasing on $[2,\infty]$ and
\[   \|A\|_{p,p} \le \begin{cases} \|A\|_{1,1}^{(2-p)/p} \|A\|_{2,2}^{2(p-1)/p} \quad & p\in (1,2) \\
            \|A\|_{2,2}^{2/p} \|A\|_{\infty,\infty}^{(p-2)/p}  & p\in (2,\infty) \end{cases} .  \]
\end{corollary}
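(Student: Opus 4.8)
The plan is to derive Corollary~\ref{cor:symmetry} as a direct consequence of the general results already established for $f(p)=\|A\|_{p,p}$, together with the self-adjointness of $A$. First I would pin down the two endpoint values: by Lemma~\ref{lem:boundaries}, $\|A\|_{1,1}$ is the maximum absolute column sum and $\|A\|_{\infty,\infty}$ is the maximum absolute row sum of $(a_{ij})$; since $A=A^*$ means $a_{ij}=\overline{a_{ji}}$, we have $|a_{ij}|=|a_{ji}|$, so the absolute row sums and absolute column sums coincide term by term, giving $\|A\|_{1,1}=\|A\|_{\infty,\infty}$. Next I would identify $\|A\|_{2,2}$ with the spectral radius: a self-adjoint matrix is diagonalizable with real eigenvalues, and $\|A\|_{2,2}=\max_i|\lambda_i|$; picking a unit eigenvector $\xi$ for an eigenvalue of maximal modulus, the relation $A\xi=\lambda\xi=\lambda I_n\xi$ with $I_n$ a (trivial) unitary permutation lets me invoke Lemma~\ref{lem:eigenvalue} to conclude $\|A\|_{p,p}\ge|\lambda|=\|A\|_{2,2}$ for every $p\in[1,\infty]$. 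That establishes the lower bound $\|A\|_{2,2}\le\|A\|_{p,p}$.

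For the upper bound and the location of the minimum, I would use Proposition~\ref{prop:key}(2): there is a $p_0$ with $f$ decreasing on $[1,p_0]$ and increasing on $[p_0,\infty]$. To show $p_0=2$, note that duality gives $\|A\|_{p,p}=\|A^*\|_{q,q}=\|A\|_{q,q}$ whenever $1/p+1/q=1$, so $f(p)=f(q)$; hence $f$ is symmetric about $p=2$ in the variable $1/p$ (i.e. $g(t)=g(1-t)$ for the convex function $g(t)=\log f(1/t)$ from the proof of Proposition~\ref{prop:key}). A convex function symmetric about the midpoint of its interval attains its minimum at that midpoint, forcing $t_0=1/2$ and therefore $p_0=2$. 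Combined with monotonicity this yields $f(p)\le\max\{f(1),f(\infty)\}=\|A\|_{1,1}=\|A\|_{\infty,\infty}$, completing the chain $\|A\|_{2,2}\le\|A\|_{p,p}\le\|A\|_{1,1}=\|A\|_{\infty,\infty}$, and it also gives the claimed direction of monotonicity on $[1,2]$ and $[2,\infty]$.

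Finally, the two displayed interpolation bounds come straight out of Riesz--Thorin (Proposition~\ref{prop:riesz}) applied on the subintervals $[1,2]$ and $[2,\infty]$. For $p\in(1,2)$, write $\frac1p=\frac{1-\theta}{1}+\frac{\theta}{2}$, which gives $\theta=2(1-1/p)=2(p-1)/p$ and $1-\theta=(2-p)/p$, so $\|A\|_{p,p}\le\|A\|_{1,1}^{(2-p)/p}\|A\|_{2,2}^{2(p-1)/p}$. For $p\in(2,\infty)$, write $\frac1p=\frac{1-\theta}{2}+\frac{\theta}{\infty}$, giving $1-\theta=2/p$ and $\theta=(p-2)/p$, hence $\|A\|_{p,p}\le\|A\|_{2,2}^{2/p}\|A\|_{\infty,\infty}^{(p-2)/p}$.

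I do not expect any real obstacle here, since every ingredient is already in place; the only point requiring a moment's care is the argument that $p_0=2$, which I would handle cleanly via the duality symmetry $f(p)=f(q)$ and the fact that a convex function symmetric about the midpoint of its domain is minimized there — this avoids any need to compute $p_0$ by hand.
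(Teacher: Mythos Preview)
Your argument is correct and follows essentially the same route as the paper: Lemma~\ref{lem:boundaries} plus self-adjointness for $\|A\|_{1,1}=\|A\|_{\infty,\infty}$, the duality symmetry $\|A\|_{p,p}=\|A\|_{q,q}$ to pin down $p_0=2$ in Proposition~\ref{prop:key}(2), and then Riesz--Thorin on $[1,2]$ and $[2,\infty]$ with the same choice of $\theta$. The only difference is that you add an independent proof of the lower bound $\|A\|_{2,2}\le\|A\|_{p,p}$ via Lemma~\ref{lem:eigenvalue}; the paper omits this and simply reads it off from $p_0=2$ (since $f(2)$ is then the minimum of $f$), so your extra step is valid but redundant, while your justification of $p_0=2$ through the symmetry $g(t)=g(1-t)$ of the convex function $g$ is more explicit than the paper's.
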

\begin{proof}
By Lemma~\ref{lem:boundaries}  and Proposition~\ref{prop:key}(2), we have $\|A\|_{p,p}\le \|A\|_{1,1}=\|A\|_{\infty,\infty}$ 
for any $p\in [1,\infty]$.
We chosse $q$ as follows:
\[  \frac{1}{p}+\frac{1}{q}=1. \]
We remark that $\|A\|_{p,p}=\|A^*\|_{q,q}=\|A\|_{q,q}$ and $p<2$ implies $q>2$.
So we have $p_0=2$, where $s_0$ is in the statement of Proposition~\ref{prop:key}(2).

When $p\in (1,2)$,  $\frac{1}{p}=\frac{1-\theta}{1}+\frac{\theta}{2}$ and $\theta=\frac{2(p-1)}{p}$.
By Proposition~\ref{prop:riesz}, we have
\[     \|A\|_{p,p} \le  \|A\|_{1,1}^{(2-p)/p} \|A\|_{2,2}^{2(p-1)/p} .  \]

When $p\in (2, \infty)$,  $\frac{1}{p}=\frac{1-\tau}{2}+\frac{\tau}{\infty}=\frac{1-\tau}{2}$ and $\tau=\frac{p-2}{p}$.
By Proposition~\ref{prop:riesz}, we have
\[     \|A\|_{p,p} \le  \|A\|_{2,2}^{2/p} \|A\|_{\infty,\infty}^{(p-2)/p} .  \]
\end{proof}

By Proposition~\ref{prop:key}, the function $f(p) = \|A\|_{p,p}$ satisfies the following relation:
\[   f(p) \le f(1)^{1/p}f(\infty)^{1-1/p}  \quad \text{for all } p\in[0,\infty].  \]
We call $f$ lagarithmic affine if it satisfies
\[   f(p) = f(1)^{1/p}f(\infty)^{1-1/p}  \quad \text{for all } p\in[0,\infty]. \]
Then the matrix $A\in M_n(\C)$ is called {\it logarithmic affine}, and we denote $A\in LA(n)$.
By definition, for $A=(a_{ij}) \in LA(n)$
\[  \|A\|_{p,p} = \|A\|_{1,1}^{1/p} \|A\|_{\infty,\infty}^{1-1/p} 
= (\max_{1\le j\le n}\sum_{i=1}^n |a_{ij}|)^{1/p}(\max_{1\le i \le n}\sum_{j=1}^n |a_{ij}|)^{1-1/p}.  \]
Next statement easily follows from Proposition~\ref{prop:key}(3):

%
%  Corollary 3.4
%
\begin{corollary}\label{cor:logaff} 
Let $A\in M_n(\C)$ and $f(p)=\|A\|_{p,p}$,   
If it holds
\[   f(p_0)=f(1)^{1/{p_0}}f(\infty)^{1-1/{p_0}} \text{ for some }  p_0\in (1,\infty),  \] 
then $A$ is logarithmic affine.
\end{corollary}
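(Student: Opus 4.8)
The plan is to obtain the conclusion as an immediate specialization of Proposition~\ref{prop:key}(3) to the endpoints $p=1$ and $r=\infty$. First I would dispose of the trivial case: if $A=0$ then $f\equiv 0$ and the asserted identity holds vacuously, so from now on assume $A\neq 0$, in which case $f(p)=\|A\|_{p,p}>0$ for every $p\in[1,\infty]$ and all the logarithms implicit in what follows are legitimate.

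Next I would set $p=1$, $q_0=p_0$ and $r=\infty$ in Proposition~\ref{prop:key}(3), with the convention $1/\infty=0$ already fixed in the proof of Proposition~\ref{prop:key}. With these choices one has $1/p-1/r=1$, $1/q_0-1/r=1/p_0$ and $1/p-1/q_0=1-1/p_0$, so the hypothesis $f(q_0)^{1/p-1/r}=f(p)^{1/q_0-1/r}f(r)^{1/p-1/q_0}$ of Proposition~\ref{prop:key}(3) becomes exactly $f(p_0)=f(1)^{1/p_0}f(\infty)^{1-1/p_0}$, which is precisely our assumption; and since $p_0\in(1,\infty)$ the strict inequalities $1<p_0<\infty$ needed to invoke the proposition are part of the hypothesis. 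Hence its conclusion applies: for every $q\in[1,\infty]$ we get $f(q)^{1/p-1/r}=f(p)^{1/q-1/r}f(r)^{1/p-1/q}$, that is, $f(q)=f(1)^{1/q}f(\infty)^{1-1/q}$. This is the definition of $A$ being logarithmic affine, so $A\in LA(n)$.

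I do not expect any genuine obstacle here; the only points that need care are the book-keeping with the convention $1/\infty=0$ when specializing Proposition~\ref{prop:key}(3) to $r=\infty$, and the observation that the exponents all remain finite and positive. As an alternative, self-contained route one could argue directly with the convex function $g(t)=\log f(1/t)$ on $[0,1]$ from the proof of Proposition~\ref{prop:key}: writing $L$ for the chord joining $(0,g(0))=(0,\log f(\infty))$ and $(1,g(1))=(1,\log f(1))$, the assumption says that $g(1/p_0)=L(1/p_0)$, i.e.\ the convex function $g$ meets its chord at the interior point $t_0=1/p_0\in(0,1)$; a convex function that touches a chord at an interior point coincides with that chord on the entire segment (comparing $g$ with $L$ on $[0,t_0]$ and on $[t_0,1]$ via two more applications of the convexity inequality), so $g=L$ on $[0,1]$, and evaluating this affine identity at $t=1/q$ recovers $f(q)=f(1)^{1/q}f(\infty)^{1-1/q}$ for all $q\in[1,\infty]$.
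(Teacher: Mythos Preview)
Your proposal is correct and follows exactly the route the paper takes: the paper simply states that the corollary ``easily follows from Proposition~\ref{prop:key}(3)'', and your argument is nothing more than the explicit specialization $p=1$, $q_0=p_0$, $r=\infty$ of that proposition, with the bookkeeping spelled out. Your added treatment of the degenerate case $A=0$ and the alternative chord-touching argument via the convex function $g(t)=\log f(1/t)$ are sound extras the paper does not bother to write down.
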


We remark that a unitary permutation $S$ satisfies $f(p)=\|S\|_{p,p}=1$ for any $p\in[1,\infty]$.
We want to investigate a matrix $A\in M_n(\C)$ which satisfies the function $f(p)=\|A\|_{p,p}$ is constant on $[1,\infty]$.
Such a matrix clearly belongs to $LA(n)$ because
\[    f(1)=f(\infty)=f(p)  \text{ and } f(p) = f(1)^{1/p}f(\infty)^{1-1/p}.   \]

%
%  Theorem 3.5
%
\begin{theorem}\label{thm:main}
Let $A=(a_{ij}) \in \M_n(\C)$ with $a_{ij}\ge 0$ for any $i,j=1,2,\ldots, n$ and $\xi ={}^t(1, 1, \cdots, 1)$.
If $A\xi = \alpha \xi$ and $A^*\xi = \alpha  \xi$ for some $\alpha\in \C$, then $\|A\|_{p,p}=\alpha$ for any $p\in[1,\infty]$.
\end{theorem}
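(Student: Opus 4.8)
The plan is to squeeze $\|A\|_{p,p}$ between two bounds that both equal $\alpha$. First I would translate the two eigenvector equations into statements about the entries. Since the $a_{ij}$ are real and nonnegative and $\xi={}^t(1,\dots,1)$, the relation $A\xi=\alpha\xi$ says $\sum_{j=1}^n a_{ij}=\alpha$ for every $i$, while $A^*\xi=\alpha\xi$ (recall from Section 2 that $A^*=\overline{{}^tA}$ has entries $a_{ji}$) says $\sum_{i=1}^n a_{ij}=\alpha$ for every $j$. In particular $\alpha$ is real and $\alpha=\sum_{j}a_{1j}\ge 0$.

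Next the upper bound. By Lemma~\ref{lem:boundaries}, the nonnegativity of the entries makes the absolute values harmless, so $\|A\|_{1,1}=\max_{j}\sum_i |a_{ij}|=\max_j\sum_i a_{ij}=\alpha$ and likewise $\|A\|_{\infty,\infty}=\max_i\sum_j a_{ij}=\alpha$. Feeding this into the Riesz--Thorin consequence displayed immediately after Proposition~\ref{prop:riesz} (the case $p_1=q_1=1$, $p_2=q_2=\infty$) gives
\[ \|A\|_{p,p}\le \|A\|_{1,1}^{1/p}\|A\|_{\infty,\infty}^{1-1/p}=\alpha^{1/p}\alpha^{1-1/p}=\alpha \]
for every $p\in[1,\infty]$. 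For the matching lower bound, note that the identity matrix $I_n$ is a unitary permutation (take $\sigma=\mathrm{id}$ and all $\alpha_i=1$) and that $A\xi=\alpha\xi=\alpha I_n\xi$ with $\xi\neq 0$; Lemma~\ref{lem:eigenvalue} then gives $\|A\|_{p,p}\ge|\alpha|=\alpha$ for all $p$. Combining the two inequalities yields $\|A\|_{p,p}=\alpha$, and in particular $A\in LA(n)$ with constant norm function.

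There is essentially no obstacle: the whole content is the bookkeeping that the hypotheses force both endpoint norms $\|A\|_{1,1}$ and $\|A\|_{\infty,\infty}$ to equal $\alpha$, after which interpolation caps $\|A\|_{p,p}$ from above and the explicit eigenvector $\xi$ caps it from below. The only points deserving a word of care are that nonnegativity of the entries is exactly what turns ``row and column sums equal $\alpha$'' into ``the maxima in Lemma~\ref{lem:boundaries} equal $\alpha$'', and that it forces $\alpha\ge 0$ so that $|\alpha|=\alpha$; the degenerate case $\alpha=0$ (where $A=0$) is subsumed automatically.
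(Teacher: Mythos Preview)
Your proof is correct and follows essentially the same route as the paper: translate the eigenvector conditions into equal row and column sums, use Lemma~\ref{lem:boundaries} to get $\|A\|_{1,1}=\|A\|_{\infty,\infty}=\alpha$, bound $\|A\|_{p,p}$ above by interpolation and below via Lemma~\ref{lem:eigenvalue}. The only cosmetic difference is that the paper cites Proposition~\ref{prop:key}(2) for the upper bound while you invoke the Riesz--Thorin consequence directly, but these amount to the same thing.
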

\begin{proof}
By the assumption $A\xi = \alpha  \xi$ and $A^*\xi = \alpha  \xi$ imply that 
\[     \sum_{j=1}^n a_{ij} =\alpha \text{ and } \sum_{i=1}^n a_{ij} = \alpha ., \]
and $\alpha$ is non-negative because $a_{i,j}\ge 0$.
By Lemma~\ref{lem:boundaries}  we have $\|A\|_{1,1}=\|A\|_{\infty,\infty}=\alpha$.
By Proposition~\ref{prop:key}(2), it follows $\|A\|_{p,p} \le \alpha$ for all $p\in [1,\infty]$.
By Lemma~\ref{lem:eigenvalue}, $\|A\|_{p,p}\ge \alpha$.
So we have $\|A\|_{p,p}=\alpha$ for any $p\in[1,\infty]$.
\end{proof}

\vspace{5mm}

%
%  Example 1
%
\noindent
{\bf Example}(1) The following matrices are corresponding to magic squares:
\[  A =\begin{pmatrix} 8 & 1 & 6 \\ 3 & 5 & 7 \\ 4 & 9 & 2 \end{pmatrix}, \quad
     B =\begin{pmatrix} 1 & 2 & 15 & 16 \\ 13 & 14 & 3 & 4 \\ 12 & 7 & 10 & 5 \\ 8 & 11 & 6 & 9 \end{pmatrix}.  \]
Here we call  $C=(c_{ij}) \in M_n(\C)$ a magic square if it satisfies $c_{ij}\ge 0$ and
there exists $K\ge 0$ such that 
\[   \sum_{k=1}^n c_{ik} = \sum_{k=1}^n c_{kj} = K,   \]
for any $i,j =1,2,\ldots,n$.
That is,  $\xi ={}^t(1, 1, \cdots, 1)$ is an eigenvector of $C$ and $C^*$ for an eigenvalue $K$.
So $\|A\|_{p,p}=15$ and $\|B\|_{p,p}=34$ for all $p\in[1,\infty]$ by Theorem~\ref{thm:main}.
In this case $A$ and $B$ are not normal.

\vspace{5mm}

%
%  Example 2
%
\noindent
{\bf Example}(2) Let $S=\begin{pmatrix} 0 & 1 & \cdots & 0 \\  0 & 0 & \ddots & \vdots  \\ \vdots & \ddots & \ddots & 1 \\ 1 & \cdots & 0 & 0 \end{pmatrix}$. 
For $\alpha_i \in \C$   $(0 \le i \le n-1)$, we consider a circulant matrix 
\[  A=\sum_{i=0}^{n-1} \alpha_iS^i =
     \begin{pmatrix} \alpha_0 & \alpha_1 & \cdots & \alpha_{n-1} \\
                          \alpha_{n-1} & \alpha_0 & \ddots & \vdots \\
                          \vdots & \ddots  & \ddots & \alpha_1 \\ 
                          \alpha_1 & \cdots & \alpha_2 & \alpha_0 \end{pmatrix}, \]
and we denote this circulant matrix by $A=C(\alpha_0,\alpha_1, \ldots, \alpha_{n-1})$. 

If $\alpha_i\ge 0$ $(0\le i \le n-1)$, then $A$ is a magic square and
\[   \|A\|_{p,p} = \sum_{i=0}^{n-1} \alpha_i   \text{ for all } p\in[1,\infty].  \]
In particular, $A\in LA(n)$.

%
%  Proposition 3.6
%
\begin{proposition}
Let $A=\sum_{i=0}^{n-1} \alpha_i S^i$, $\alpha_i\in \C$ $(0\le i \le n-1)$. 
Then $A\in LA(n)$ if and only if there exists $\omega \in \C$ such that  $\omega^n=1$ and
\[   A = \beta C(|\alpha_0|, \omega |\alpha_1|, \omega^2|\alpha_2|, \ldots,\omega^{n-1}|\alpha_{n-1}|)  \]
for some $\beta\in \C$ with $|\beta|=1$. 

In particular, $A\in LA(n)$ implies $\|A\|_{p,p} = \sum_{i=1}^n |\alpha_i|$ for any $p\in [1,\infty]$.
\end{proposition}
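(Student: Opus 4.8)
The plan is to reduce the problem to the non-negative case, which is already handled by Theorem~\ref{thm:main}. The key observation is that multiplying rows and columns by unimodular scalars does not change any $p$-norm, provided the scalar pattern respects the circulant structure. First I would record the easy direction: if $\omega^n=1$ and $|\beta|=1$, then $C(|\alpha_0|,\omega|\alpha_1|,\dots,\omega^{n-1}|\alpha_{n-1}|)$ differs from the non-negative circulant matrix $C(|\alpha_0|,\dots,|\alpha_{n-1}|)$ by conjugation by the diagonal unitary permutation $D=\mathrm{diag}(1,\omega,\omega^2,\dots,\omega^{n-1})$. Indeed, $D S^i D^{-1}=\omega^i S^i$ (since $S$ shifts indices by one, conjugation by $D$ multiplies the $i$-th off-diagonal by $\omega^i$ when $\omega^n=1$), so $D\,C(|\alpha_0|,\dots,|\alpha_{n-1}|)\,D^{-1}=C(|\alpha_0|,\omega|\alpha_1|,\dots,\omega^{n-1}|\alpha_{n-1}|)$. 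Since $D$ and $D^{-1}$ are unitary permutations, hence $p$-isometries for every $p$ by Lemma~\ref{lem:isometry}(1), and multiplication by $\beta$ only scales norms by $1$, the matrix $A$ in the statement has $\|A\|_{p,p}=\|C(|\alpha_0|,\dots,|\alpha_{n-1}|)\|_{p,p}=\sum_i|\alpha_i|$ for all $p$ by the magic-square remark preceding the proposition (Example (2), via Theorem~\ref{thm:main}). In particular $A\in LA(n)$, and the ``in particular'' clause follows.

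For the harder converse, suppose $A=\sum_i\alpha_i S^i\in LA(n)$. By Lemma~\ref{lem:boundaries}, $\|A\|_{1,1}=\|A\|_{\infty,\infty}=\sum_i|\alpha_i|=:M$, because every row and every column of $A$ has entries $\{|\alpha_0|,\dots,|\alpha_{n-1}|\}$ in some order. The logarithmic-affine hypothesis then forces $\|A\|_{p,p}=M$ for all $p$, in particular $\|A\|_{2,2}=M$. Now the spectral data of a circulant matrix is explicit: $A=\frac1{?}\,F^*\mathrm{diag}(\hat\alpha(1),\hat\alpha(\zeta),\dots,\hat\alpha(\zeta^{n-1}))F$ where $F$ is the Fourier matrix, $\zeta=e^{2\pi i/n}$, and $\hat\alpha(z)=\sum_i\alpha_i z^i$; so $\|A\|_{2,2}=\max_k|\hat\alpha(\zeta^k)|$. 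Hence $\max_k|\hat\alpha(\zeta^k)|=\sum_i|\alpha_i|$. The key step is to exploit the triangle inequality $|\hat\alpha(\zeta^k)|=|\sum_i\alpha_i\zeta^{ki}|\le\sum_i|\alpha_i\zeta^{ki}|=\sum_i|\alpha_i|$ with equality: equality in the triangle inequality for complex numbers forces all the summands $\alpha_i\zeta^{ki}$ (over the indices $i$ with $\alpha_i\ne0$) to lie on a common ray from the origin, i.e.\ $\alpha_i\zeta^{ki}=|\alpha_i|\beta$ for a fixed unimodular $\beta$. Solving, $\alpha_i=\beta\,|\alpha_i|\,\zeta^{-ki}$ for every $i$ with $\alpha_i\ne0$, and the formula also holds trivially when $\alpha_i=0$; setting $\omega=\zeta^{-k}$ (so $\omega^n=1$) gives exactly $A=\beta\,C(|\alpha_0|,\omega|\alpha_1|,\dots,\omega^{n-1}|\alpha_{n-1}|)$.

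I expect the main obstacle to be the clean identification $\|A\|_{2,2}=\max_k|\hat\alpha(\zeta^k)|$ together with the care needed when some $\alpha_i$ vanish (the equality case of the triangle inequality only constrains the nonzero terms, so one must check the degenerate indices separately — but they pose no problem since $0=\beta\cdot0\cdot\omega^i$ holds for any $\beta,\omega$). A secondary point is making sure the normalization of the Fourier diagonalization is stated correctly; since only the \emph{set} of moduli of eigenvalues matters, and $\|A\|_{2,2}$ equals the spectral radius for normal $A$ (and circulant matrices are normal), this is routine. One should also note that the converse argument used only $\|A\|_{2,2}=M$, which already follows from $A\in LA(n)$ via Corollary~\ref{cor:logaff} and continuity, so no stronger hypothesis is needed.
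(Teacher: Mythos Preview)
Your proof is correct and follows essentially the same route as the paper: reduce $A\in LA(n)$ to $\|A\|_{2,2}=\sum_i|\alpha_i|$ via Corollary~\ref{cor:logaff}, compute $\|A\|_{2,2}=\max_{\omega^n=1}\bigl|\sum_i\alpha_i\omega^i\bigr|$ by diagonalizing the shift, and then use the equality case of the triangle inequality. One harmless slip: with your $D=\mathrm{diag}(1,\omega,\dots,\omega^{n-1})$ one actually gets $DSD^{-1}=\omega^{-1}S$ rather than $\omega S$, but since $\omega^{-1}$ is also an $n$-th root of unity this changes nothing (the paper handles the sufficiency direction directly from the spectral formula rather than via your diagonal conjugation, but the content is identical).
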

\begin{proof}
By Lemma~\ref{lem:boundaries}, $\|A\|_{1,1} = \|A\|_{\infty,\infty}= \sum_{i=0}^{n-1} |\alpha_i|$.
So $A\in LA(n)$ is equivalent to $\|A\|_{2,2} =\sum_{i=1}^n |\alpha_i|$ by Corollary~\ref{cor:logaff}.

We can choose a unitary such that
\[   U^*SU = \begin{pmatrix} 1 & 0 & \cdots & 0 \\ 0 & e^{2\pi\sqrt{-1}/n} & \ddots & \vdots \\ \vdots & \ddots & \ddots & 0 \\ 0 & \cdots & 0 & e^{2\pi(n-1)\sqrt{-1}/n} \end{pmatrix}.  \]
Then
\begin{align*}
     & U^*AU  = \sum_{j=0}^{n-1} \alpha_i U^*S^j U  \\
   = & \begin{pmatrix} \sum_{j=0}^{n-1}\alpha_i & 0 & \cdots & 0 \\ 
                     0 &  \sum_{j=0}^{n-1} e^{2\pi j \sqrt{-1}/n} \alpha_j & \ddots & \vdots \\ 
                     \vdots & \ddots & \ddots & 0 \\ 
                     0 & \cdots & 0 & \sum_{j=0}^{n-1} e^{2\pi(n-1)j\sqrt{-1}/n} \alpha_j \end{pmatrix}, 
\end{align*}
and the spectrum of $A$ is equal to $\{\sum_{i=0}^{n-1} \alpha_i \omega^i : \omega \in \C \text{ with } \omega^n=1 \}$.
So we have
\[   \|A\|_{2,2} = \max \{ |\sum_{i=0}^{n-1} \alpha_i \omega^i| : \omega\in \C \text{ with } \omega^n=1\} \le \sum_{i=0}^{n-1} |\alpha_i|.   \]
Since $\|A\|_{2,2} = \sum_{i=0}^{n-1} |\alpha_i|$, there exists $\omega_0 \in \C$ with $\omega_0^n=1$ and
\[   |\sum_{i=0}^{n-1} \alpha_i \omega_0^i| = \sum_{i=0}^{n-1} |\alpha_i|.  \]
So there exists $\beta \in \C$ with $|\beta|=1$ and
\[   \alpha_i\omega_0^i = \beta |\alpha_i|  \quad \text{ for all } i=0,1,\ldots,n-1.  \]
This means  $A=\beta C(|\alpha_0|, \bar{\omega_0}|\alpha_1|, \ldots, \bar{\omega_o}^{n-1}|\alpha_{n-1}|)$.
\end{proof}

\vspace{5mm}

%
%  Example 3
%
\noindent
{\bf Example}(3) We consider a matrix of Hankel type as follows:
\[   B = \begin{pmatrix} b_{00}  & b_{01} & \cdots & b_{0,n-1}  \\
                               b_{10} & b_{11} & \cdots & b_{1,n-1}  \\
                               \vdots & \vdots & \ddots &   \vdots  \\
                               b_{n-1,0} & b_{n-1,1} & \cdots & b_{n-1,n-1}  \end{pmatrix},  \]
where  $b_{i,j} = \alpha_k\in \C$  ($k\equiv  i+j  \mod n$), that is,
\[   B = \begin{pmatrix} \alpha_0 & \alpha_1 & \alpha_2 & \cdots & \alpha_{n-1}  \\
                               \alpha_1 & \alpha_2 & \cdots & \cdots & \alpha_0  \\
                               \alpha_2 & \vdots & \ddots &\cdots & \vdots \\
                               \vdots  &  \vdots & \vdots & \ddots &  \alpha_{n-3}  \\
                               \alpha_{n-1} & \alpha_0 & \cdots & \alpha_{n-3} & \alpha_{n-2} \end{pmatrix} . \]
We denote $B=H(\alpha_0, \alpha_1, \ldots, \alpha_{n-1})$.                             
                               
This matrix $B$ can be represented by a product of a permutation matrix and a circulant matrix.
For examples,
\begin{gather*}
     \begin{pmatrix} \alpha_0 & \alpha_1 & \alpha_2 \\ \alpha_1 & \alpha_2 & \alpha_0 \\ \alpha_2 & \alpha_0 & \alpha_1 \end{pmatrix}
   = \begin{pmatrix} 1 & 0 & 0 \\ 0 & 0 & 1 \\ 0 & 1 & 0 \end{pmatrix} 
      \begin{pmatrix} \alpha_0 & \alpha_1 & \alpha_2 \\ \alpha_2 & \alpha_0 & \alpha_1 \\ \alpha_1 & \alpha_2 & \alpha_0 \end{pmatrix},  \\
     \begin{pmatrix} \alpha_0 & \alpha_1 & \alpha_2 & \alpha_3 \\ \alpha_1 & \alpha_2 & \alpha_3 & \alpha_0  \\ 
                         \alpha_2 & \alpha_3 & \alpha_0 & \alpha_1 \\ \alpha_3 & \alpha_0 & \alpha_1 & \alpha_2 \end{pmatrix}
   = \begin{pmatrix} 1 & 0 & 0 & 0 \\ 0 & 0 & 0 & 1 \\ 0 & 0 & 1 & 0  \\ 0 & 1 & 0 & 0 \end{pmatrix} 
      \begin{pmatrix} \alpha_0 & \alpha_1 & \alpha_2 & \alpha_3 \\ \alpha_3 & \alpha_0 & \alpha_1 & \alpha_2 \\ \alpha_2 & \alpha_3 & \alpha_0 & \alpha_1 \\
                          \alpha_1 & \alpha_2 & \alpha_3 & \alpha_0  \end{pmatrix} 
\end{gather*}
and, in general,
\[   H(\alpha_0, \alpha_1, \ldots, \alpha_{n-1}) = H(1,0,\ldots, 0) C(\alpha_0, \alpha_1, \ldots, \alpha_{n-1}).  \]
So we have, by Lemma~\ref{lem:isometry}
\[   \|B\|_{p,p}=\| H(\alpha_0,\alpha_1, \ldots, \alpha_{n-1}) \|_{p,p}
                  =\| C(\alpha_0,\alpha_1, \ldots, \alpha_{n-1}) \|_{p,p} . \]

We remark that $B=H(\alpha_0, \alpha_1,\ldots, \alpha_{n-1}) = {}^t H(\alpha_0, \alpha_1,\ldots, \alpha_{n-1})$.
Whenever $\alpha_i\in \R$ $(i=0,1,\ldots,n-1)$, we have
\[           \|B\|_{p,p}  = \|B^*\|_{q,q}= \|B\|_{q,q},  \] 
where $\frac{1}{p}+\frac{1}{q} =1$, and
\[   \|B\|_{2,2} = \min \{ \|B\|_{p,p} : p\in [1,\infty] \}  \]
by Corollary~\ref{cor:symmetry}.

% 
%  Corollary 3.7
%
\begin{corollary}
\begin{enumerate}
  \item[$(1)$] $B=H(\alpha_0, \alpha_1,\ldots, \alpha_{n-1}) \in LA(n)$ $\Leftrightarrow$ there exist $\beta, \omega \in \C$ such that $|\beta|=1$, $\omega^n=1$ and
\[  \alpha_i = \beta \omega^i |\alpha_i| \quad (i=0,1,\ldots, n-1).   \]
Then $\|B\|_{p,p} = \sum_{i=0}^{n-1} |\alpha_i|$ for any $p\in[1,\infty]$,
  \item[$(2)$] Let $A=C(\alpha_0, \alpha_1,\ldots, \alpha_{n-1})$ and $\alpha_i\in \R$ $(i=0,1,\ldots, n-1)$. 
Then we have
\begin{gather*}
  \|A\|_{2,2} = \max\{ |\sum_{i=0}^{n-1} \alpha_i \omega^i| : \omega \in \C \text{ with } \omega^n=1 \}  \\
  \text{ and } \|A\|_{2,2} \le \|A\|_{p,p} \le \begin{cases} (\sum_{i=0}^{n-1} |\alpha_i|)^{(2-p)/p}\|A\|_{2,2}^{2(p-1)/p} & p\in [1,2) \\
       (\sum_{i=0}^{n-1} |\alpha_i|)^{(p-2)/p}\|A\|_{2,2}^{2/p} & p\in [2,\infty] \end{cases} . 
\end{gather*}
\end{enumerate}
\end{corollary}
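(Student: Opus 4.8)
The plan is to reduce both parts to facts already established for circulant matrices. For (1), recall from Example (3) that $H(1,0,\ldots,0)$ is a permutation matrix and $B=H(\alpha_0,\ldots,\alpha_{n-1})=H(1,0,\ldots,0)\,C(\alpha_0,\ldots,\alpha_{n-1})$, so by Lemma~\ref{lem:isometry}(1) one gets $\|B\|_{p,p}=\|C(\alpha_0,\ldots,\alpha_{n-1})\|_{p,p}$ for every $p\in[1,\infty]$. Since membership in $LA(n)$ is a property of the single function $p\mapsto\|\cdot\|_{p,p}$ (through its values at $p=1$, at $p=\infty$, and at general $p$), it follows that $B\in LA(n)$ if and only if $C(\alpha_0,\ldots,\alpha_{n-1})\in LA(n)$. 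I would then quote the Proposition characterizing circulant matrices in $LA(n)$: $C(\alpha_0,\ldots,\alpha_{n-1})\in LA(n)$ exactly when there are $\beta,\omega_0\in\C$ with $|\beta|=1$, $\omega_0^n=1$ and $\alpha_i\omega_0^i=\beta|\alpha_i|$ for all $i$; putting $\omega=\overline{\omega_0}$ (still an $n$-th root of unity since $|\omega_0|=1$) this is precisely the condition $\alpha_i=\beta\omega^i|\alpha_i|$ of the Corollary, and in that case $\|B\|_{p,p}=\|C(\alpha_0,\ldots,\alpha_{n-1})\|_{p,p}=(\sum_{i=0}^{n-1}|\alpha_i|)^{1/p}(\sum_{i=0}^{n-1}|\alpha_i|)^{1-1/p}=\sum_{i=0}^{n-1}|\alpha_i|$.

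For (2), the value $\|A\|_{2,2}$ is read straight off the diagonalization performed in that same Proposition: $U^*AU=\mathrm{diag}\bigl(\sum_{i=0}^{n-1}\alpha_i e^{2\pi ij\sqrt{-1}/n}\bigr)_{j=0}^{n-1}$, hence $\|A\|_{2,2}=\max_{j}\bigl|\sum_{i=0}^{n-1}\alpha_i e^{2\pi ij\sqrt{-1}/n}\bigr|=\max\{|\sum_{i=0}^{n-1}\alpha_i\omega^i|:\omega\in\C,\ \omega^n=1\}$, and this needs no reality hypothesis. For the two-sided estimate I would pass to $B=H(\alpha_0,\ldots,\alpha_{n-1})$. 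By Lemma~\ref{lem:boundaries}, every row and every column of $B$ is a rearrangement of $(\alpha_0,\ldots,\alpha_{n-1})$, so $\|B\|_{1,1}=\|B\|_{\infty,\infty}=\sum_{i=0}^{n-1}|\alpha_i|$; when the $\alpha_i$ are real, $B={}^tB$ and $\overline{B}=B$ give $B=B^*$, so Corollary~\ref{cor:symmetry} applies to $B$ with turning point $p_0=2$. Since $B=H(1,0,\ldots,0)A$ and $H(1,0,\ldots,0)$ is a unitary permutation, Lemma~\ref{lem:isometry}(1) gives $\|B\|_{p,p}=\|A\|_{p,p}$ for all $p$, in particular $\|B\|_{2,2}=\|A\|_{2,2}$. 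Substituting $\|B\|_{1,1}=\|B\|_{\infty,\infty}=\sum_{i=0}^{n-1}|\alpha_i|$ and $\|B\|_{2,2}=\|A\|_{2,2}$ into the conclusion of Corollary~\ref{cor:symmetry} yields $\|A\|_{2,2}\le\|A\|_{p,p}$ together with the two displayed upper bounds for $p\in(1,2)$ and $p\in(2,\infty)$; at $p=1$, $p=2$, $p=\infty$ these bounds reduce to equalities ($\|A\|_{1,1}$, $\|A\|_{2,2}$, $\|A\|_{\infty,\infty}$ respectively), so they extend to the half-closed intervals $[1,2)$ and $[2,\infty]$ as stated.

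There is essentially no analytic difficulty here: the whole argument transports earlier statements along the identity $B=H(1,0,\ldots,0)\,C(\alpha_0,\ldots,\alpha_{n-1})$ and along the duality $\|\cdot\|_{p,p}=\|\cdot^*\|_{q,q}$. The one point that wants care is the bookkeeping of the root of unity in (1) — the Proposition produces $\omega_0$ with $\alpha_i\omega_0^i=\beta|\alpha_i|$, whereas the Corollary is phrased with $\omega$ satisfying $\alpha_i=\beta\omega^i|\alpha_i|$, so one must verify that $\omega=\overline{\omega_0}$ works, which it does since $|\omega_0|=1$ and $\overline{\omega_0}^{\,n}=\overline{\omega_0^{\,n}}=1$ — and, for (2), the observation that although a real circulant $A$ need not itself be self-adjoint, the associated symmetric Hankel-type matrix $B$ built from the same data is, which is exactly what licenses the appeal to Corollary~\ref{cor:symmetry}.
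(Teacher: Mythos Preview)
Your proposal is correct and follows essentially the same route the paper intends: the Corollary is stated without an explicit proof precisely because everything is already set up in Example~(3) and Proposition~3.6, and you have filled in those implicit steps accurately (transporting the $LA(n)$ characterization along $B=H(1,0,\ldots,0)\,C(\alpha_0,\ldots,\alpha_{n-1})$ for part~(1), and applying Corollary~\ref{cor:symmetry} to the self-adjoint Hankel matrix $B$ rather than to $A$ for part~(2)). One small simplification: the \emph{statement} of Proposition~3.6 already reads $\alpha_i=\beta\omega^i|\alpha_i|$, so no reconciliation with $\overline{\omega_0}$ is needed---that step belongs to the proof of the Proposition, not to its use here.
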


\noindent
{\bf Remark.}  In the case $\alpha_i \in \C$ $(0 \le i \le n-1)$, we can get the following estimation of $\|A\|_{p,p}$, 
where $A=C(\alpha_0,\alpha_1,\ldots, \alpha_{n-1})$:
We have already seen that the spectrum of normal matrix $A$ is equal to 
$\{ \sum_{j=0}^{n-1} \alpha_j\omega^j : \omega\in \C$ with  $\omega^n=1 \}$
in Proposition 3.6.
By Lemma~\ref{lem:inequality},
\begin{align*}
   & \|C(\alpha_0,\alpha_1,\ldots,\alpha_{n-1})\|_{p,p}  \le \|U\|_{2,p} \|U^*AU\|_{2,2} \|U^*\|_{p,2}  \\
    \le & n^{|1-\frac{2}{p}|} \max \{ |\sum_{j=0}^{n-1} \alpha_j\omega^j| : \omega\in \C \text{ with } \omega^n=1 \}.
\end{align*}

%%%%%
%%%%%  section 4
%%%%%

\section{Norms of block matrices}

Let $p\in [1,\infty]$, $\xi = \begin{pmatrix} x_1 \\ x_2 \\ \vdots \\ x_n \end{pmatrix}\in \ell^p(n)$  
and $\eta =\begin{pmatrix} y_1 \\ y_2 \\ \vdots \\ y_k \end{pmatrix}\in \ell^p(k)$.
We define $\xi\oplus \eta \in \ell^p(n+k)$ as follows:
\[   \xi \oplus \eta = \begin{pmatrix} \xi \\ \eta \end{pmatrix}  \]
and $\| \xi \oplus \eta \|_p = (|x_1|^p + \cdots + |x_n|^p + |y_1|^p+\cdots +|y_k|^p ) ^{1/p} = (\|\xi\|_p^p + \|\eta\|_p^p)^{1/p}$.
When $\xi= \oplus_{i=1}^k \xi_i = \begin{pmatrix} \xi_1 \\ \xi_2 \\ \vdots \\ \xi_k \end{pmatrix}$ and
$\xi_i = \begin{pmatrix} x^i_1 \\ x^i_2 \\ \vdots \\ x^i_n \end{pmatrix} \in \ell^p(n)$ $(i=1,2,\ldots, k)$,
we have
$\xi \in \ell^p(kn)$ and $\|\xi\|_p = (\sum_{i=1}^k \|\xi_i\|_p^p)^{1/p}=(\sum_{i=1}^k \sum_{j=1}^n |x^i_j|^p)^{1/p}$.
 
%
%  Lemma 4.1
%
\begin{lemma}
Let $p\in [1,\infty]$ and $n, m\in \N$ with $n<m$.
\begin{enumerate}
  \item[$(1)$]  The mapping $\iota : \ell^p(n) \ni \xi \mapsto \iota(\xi)= \xi \oplus 0_{m-n} = \begin{pmatrix} \xi \\ 0 \end{pmatrix}\in \ell^p(m)$ is isometric.
  \item[$(2)$]  For $A\in \M_n(\C)$, we define $\iota(A) \in \M_m(\C)$ as follows:
\[    \iota(A) \zeta =\iota(A) (\xi\oplus \eta)= A\xi \oplus 0, \]
where $\zeta\in \ell^p(m), \xi\in\ell^p(n) \text{ and } \eta\in\ell^p(m-n)$. 
Then 
\[   \|\iota(A)\|_{p,p} = \|A\|_{p,p}.   \]
\end{enumerate}
\end{lemma}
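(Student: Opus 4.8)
The plan is to treat the two assertions separately, both being immediate consequences of the definition of $\|\cdot\|_p$ on $\ell^p(m)$ recalled above.

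For part (1), I would simply compute directly. For $\xi\in\ell^p(n)$ with $p<\infty$ we have $\|\iota(\xi)\|_p = \|\xi\oplus 0_{m-n}\|_p = (\|\xi\|_p^p + \|0_{m-n}\|_p^p)^{1/p} = \|\xi\|_p$, and for $p=\infty$ we have $\|\iota(\xi)\|_\infty = \max\{\|\xi\|_\infty,\,0\} = \|\xi\|_\infty$. Since $\iota$ is visibly linear, it is an isometric embedding of $\ell^p(n)$ into $\ell^p(m)$.

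For part (2), I would first observe that $\iota(A)$ is the $m\times m$ matrix having $A$ in its upper-left $n\times n$ corner and zeros elsewhere, hence a genuine element of $\M_m(\C)$. To prove $\|\iota(A)\|_{p,p}\le\|A\|_{p,p}$, take an arbitrary $\zeta=\xi\oplus\eta$ with $\xi\in\ell^p(n)$, $\eta\in\ell^p(m-n)$; then $\|\iota(A)\zeta\|_p = \|A\xi\oplus 0_{m-n}\|_p = \|A\xi\|_p \le \|A\|_{p,p}\,\|\xi\|_p \le \|A\|_{p,p}\,\|\zeta\|_p$, where the last inequality uses $\|\xi\|_p\le\|\zeta\|_p$, which is immediate from $\|\zeta\|_p^p=\|\xi\|_p^p+\|\eta\|_p^p$ (or from part (1) composed with the coordinate projection $\ell^p(m)\to\ell^p(n)$). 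For the reverse inequality, since $\C^n$ is finite-dimensional the supremum defining $\|A\|_{p,p}$ is attained, so I may pick $\xi_0$ with $\|\xi_0\|_p=1$ and $\|A\xi_0\|_p=\|A\|_{p,p}$; then $\zeta_0=\iota(\xi_0)$ satisfies $\|\zeta_0\|_p=1$ by part (1) and $\|\iota(A)\zeta_0\|_p=\|A\xi_0\|_p=\|A\|_{p,p}$, whence $\|\iota(A)\|_{p,p}\ge\|A\|_{p,p}$. Combining the two inequalities yields the equality.

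No genuine difficulty arises in this argument; the only points that call for a moment's care are the $p=\infty$ endpoint, where the "$p$-th power" bookkeeping is replaced throughout by a maximum, and the appeal to attainment of the operator norm in finite dimensions for the lower bound — the latter being avoidable by a routine $\varepsilon$-approximation argument if one prefers to keep the reasoning uniform in $p$.
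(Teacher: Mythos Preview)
Your proof is correct and follows essentially the same approach as the paper's. The paper is much terser---it dismisses part (1) with ``It is clear'' and handles part (2) by a single chain of equalities that restricts the supremum to vectors of the form $\xi\oplus 0$---whereas you spell out the two inequalities and the $p=\infty$ case explicitly; but the underlying argument is identical.
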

\begin{proof}
(1) It is clear.

(2) Using (1), we have
\begin{align*}
  \|\iota(A)\|_{p,p} & = \sup \{ \|\iota(A)\zeta\|_p : \|\zeta\|_p=1\}  \\
    & = \sup \{ \|\iota(A)(\xi\oplus 0)\|_p : \|\xi\|_p=1\} \\
    & =\sup\{ \|A\xi\|_p : \|\xi\|_p=1\} = \|A\|_{p,p}.  
\end{align*}
\end{proof}

For $A\in \M_{n,m}(\C)$, we can see that $A$ is a linear map from $\ell^p(m)$ to $\ell^p(n)$.
If $n>m$ (respectively $n<m$), we define $\tilde{A}(\in \M_{\max\{m,n\}}(\C) )$ as follows:
\begin{gather*}
    \tilde{A}(\xi\oplus \eta) =A\xi  \in \ell^p(n)   \\
    (\text{respectively } \tilde{A}\xi = A\xi \oplus 0_{m-n} \in \ell^p(n)  ) 
\end{gather*}
for any $\xi\in\ell^p(m)$.
Then $\|A\|_{p,p} = \|\tilde{A}\|_{p,p}$, where
\begin{gather*}
    \|A\|_{p,p} = \sup \{ |\langle A\xi, \eta \rangle| : \xi \in \ell^p(m), \eta \in \ell^p(n), \|\xi\|_p=1, \|\eta\|_p=1 \},  \\
\intertext{and}
    \|\tilde{A}\|_{p,p} = \sup \{ |\langle A\xi, \zeta \rangle| : \xi, \zeta \in \ell^p(\max\{ m, n \}),  \|\xi\|_p=1, \|\zeta\|_p=1 \} 
\end{gather*} 
by the similar reason as above.

%
%  Theorem 4.2
%
\begin{theorem}
Let $A\in \M_n(\C)$, $B\in \M_m(\C)$ and  $C=A\oplus B\in \M_{n+m}(\C)$.
\begin{enumerate}
  \item[$(1)$]  $\|C\|_{p,p} = \max\{ \|A\|_{p,p}, \|B\|_{p,p}\}$ for any $p\in [1,\infty]$.
  \item[$(2)$]  If  $C \in LA(n+m)$ then $A\in LA(n)$ or $B\in LA(m)$. 
Moreover we have $\|A\|_{p,p}=\|C\|_{p,p}$ for all $p\in[1,\infty]$ or $\|B\|_{p,p} =\|C\|_{p,p}$ for all $p\in[1,\infty]$.
\end{enumerate}
\end{theorem}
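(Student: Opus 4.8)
The plan for part (1) is a direct computation exploiting the block‑diagonal form. For $\xi\in\ell^p(n)$ and $\eta\in\ell^p(m)$ one has $C(\xi\oplus\eta)=A\xi\oplus B\eta$, so for finite $p$
\[
   \|C(\xi\oplus\eta)\|_p^p=\|A\xi\|_p^p+\|B\eta\|_p^p\le\|A\|_{p,p}^p\|\xi\|_p^p+\|B\|_{p,p}^p\|\eta\|_p^p\le M^p\bigl(\|\xi\|_p^p+\|\eta\|_p^p\bigr),
\]
where $M:=\max\{\|A\|_{p,p},\|B\|_{p,p}\}$; since $\|\xi\oplus\eta\|_p^p=\|\xi\|_p^p+\|\eta\|_p^p$ this yields $\|C\|_{p,p}\le M$, and the case $p=\infty$ is identical with the $\ell^p$‑sums replaced by maxima. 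For the reverse inequality I would test $C$ on vectors of the form $\xi\oplus 0$ and $0\oplus\eta$: since $\|\xi\oplus 0\|_p=\|\xi\|_p$ and $\|(A\xi)\oplus 0\|_p=\|A\xi\|_p$ (the isometric embedding of the preceding lemma), one gets $\|C\|_{p,p}\ge\|A\|_{p,p}$, and likewise $\|C\|_{p,p}\ge\|B\|_{p,p}$, hence $\|C\|_{p,p}\ge M$.

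For part (2) the plan is to work in the logarithmic coordinates used in the proof of Proposition~\ref{prop:key}. If $A=0$ or $B=0$ the claim is immediate (then, by part (1), $f_C$ equals $f_B$ or $f_A$ identically), so I assume both are nonzero; then $f_A(p):=\|A\|_{p,p}$, $f_B(p):=\|B\|_{p,p}$ and $f_C(p):=\|C\|_{p,p}$ are strictly positive on $[1,\infty]$, and the functions $g_A(t):=\log f_A(1/t)$, $g_B(t):=\log f_B(1/t)$, $g_C(t):=\log f_C(1/t)$ are continuous and convex on $[0,1]$ by Proposition~\ref{prop:key}. The hypothesis $C\in LA(n+m)$ says exactly that $g_C$ is \emph{affine} on $[0,1]$. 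By part (1), $f_C=\max\{f_A,f_B\}$, so $g_C=\max\{g_A,g_B\}$ pointwise; in particular $g_A\le g_C$, $g_B\le g_C$, and for every $t\in[0,1]$ at least one of $g_A(t)=g_C(t)$, $g_B(t)=g_C(t)$ holds.

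Now I would split into two cases. If $g_A\equiv g_C$ on $[0,1]$, then $f_A\equiv f_C$, so $A$ is logarithmic affine and $\|A\|_{p,p}=\|C\|_{p,p}$ for every $p$. Otherwise $g_C-g_A$ is continuous, nonnegative and not identically zero, hence strictly positive on some open interval $(a,b)\subseteq[0,1]$ with $a<b$; on $(a,b)$ we must then have $g_B=g_C$. The crucial step is to promote this local coincidence to $g_B\equiv g_C$ on all of $[0,1]$: since $g_B$ is convex, $g_C$ is affine, and $g_B\le g_C$ everywhere, agreement of $g_B$ with the affine majorant $g_C$ along the whole subinterval $(a,b)$ forces agreement everywhere. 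Concretely, for $t_1<t_2$ in $(a,b)$ and $t\in(b,1]$, writing $t_2=\lambda t_1+(1-\lambda)t$ with $\lambda\in(0,1)$ and using convexity of $g_B$ at $t_2$ together with the affinity of $g_C$ gives $g_B(t)\ge g_C(t)$, hence $g_B(t)=g_C(t)$; the mirror argument covers $t\in[0,a)$, and continuity of $g_B$ and $g_C$ covers $t\in[a,b]$. Thus $g_B\equiv g_C$, so $f_B\equiv f_C$, $B$ is logarithmic affine, and $\|B\|_{p,p}=\|C\|_{p,p}$ for every $p$. In either case the ``moreover'' statement holds, and it plainly implies $A\in LA(n)$ or $B\in LA(m)$.

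Part (1) and the passage to the $g$‑coordinates are routine. The main obstacle is the convexity argument in the last case — showing that a convex function which touches an affine upper bound along an entire subinterval must coincide with it on the whole interval. Once that is in hand, the rest is bookkeeping assembled from part (1) and Proposition~\ref{prop:key}.
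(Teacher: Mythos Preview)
Your argument is correct. Part~(1) is exactly the paper's proof. For part~(2) you take a somewhat different route from the paper, though both rest on the convexity of $g(t)=\log\|A\|_{1/t,1/t}$ established in Proposition~\ref{prop:key}.

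The paper's approach is slightly more economical: instead of finding an \emph{interval} on which $g_B=g_C$ and extending by your convex-under-affine argument, it observes that agreement at a \emph{single} interior point already suffices. Namely, if $\|A\|_{p_0,p_0}=\|C\|_{p_0,p_0}$ for some $p_0\in(1,\infty)$, then necessarily $\|A\|_{1,1}=\|C\|_{1,1}$ and $\|A\|_{\infty,\infty}=\|C\|_{\infty,\infty}$ (otherwise Riesz--Thorin would give the strict inequality $\|A\|_{p_0,p_0}\le\|A\|_{1,1}^{1/p_0}\|A\|_{\infty,\infty}^{1-1/p_0}<\|C\|_{1,1}^{1/p_0}\|C\|_{\infty,\infty}^{1-1/p_0}=\|C\|_{p_0,p_0}$), and then Corollary~\ref{cor:logaff} immediately gives $A\in LA(n)$ with $f_A\equiv f_C$. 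In the complementary case $\|A\|_{p,p}<\|C\|_{p,p}$ for every $p\in(1,\infty)$, part~(1) forces $\|B\|_{p,p}=\|C\|_{p,p}$ on all of $(1,\infty)$, hence on $[1,\infty]$ by continuity. Your interval-extension argument is a perfectly valid alternative and is in fact a direct proof of the same convexity principle that underlies Corollary~\ref{cor:logaff}; the paper simply invokes that corollary as a black box.
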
 
\begin{proof}
(1)  It is clear that $\|C\|_{p,p} \ge \max\{\|A\|_{p,p}, \|B\|_{p.p} \}$.

By definition, there is $\zeta\in\ell^p(n+m)$ with $\|\zeta\|_p=1$ and $\|C\|_{p,p} =\|C\zeta\|_p$.
Then $\zeta$ can be decomposed by $\zeta=\xi\oplus \eta\in \ell^p(n)\oplus \ell^p(n)$ with $\|\xi\|_p^p + \|\eta\|_p^p = 1$.
Since
\begin{align*}
  \|C\|_{p,p} & = \|C\zeta\|_p = \|A\xi \oplus B\eta\|_p = (\|A\xi\|_p^p + \|B\eta\|_p^p)^{1/p}  \\
   & \le ( \|A\|_{p,p}^p\|\xi\|_p^p + \|B\|_{p,p}^p\| \eta \|_p^p)^{1/p} = ( \max\{ \|A\|_{p,p}^p, \|B\|_{p,p}^p\} )^{1/p} \\
   & = \max \{ \|A\|_{p,p}, \|B\|_{p,p} \},
\end{align*} 
we have done.

(2) When $\|A\|_{1,1} < \|C\|_{1,1}$ or $\|A\|_{\infty, \infty} < \|C\|_{\infty,\infty}$, we have, for any $p\in (1,\infty)$,
\[  \|A\|_{p,p} \le \|A\|_{1,1}^{1/p} \|A\|_{\infty,\infty}^{1-1/p} <  \|C\|_{1,1}^{1/p} \|C\|_{\infty,\infty}^{1-1/p}=\|C\|_{p,p}.  \]

If $\|A\|_{p_0,p_0} =\|C\|_{p_0,p_0}$ for some $p_0\in (1,\infty)$, then, by the above fact, it holds $\|A\|_{1,1}=\|C\|_{1,1}$ and 
$\|A\|_{\infty,\infty}=\|C\|_{\infty,\infty}$.
This means
\[  \|A\|_{p_0,p_0} =\|C\|_{p_0,p_0}= \|C\|_{1,1}^{1/p_0} \|C\|_{\infty,\infty}^{1-1/p_0}=\|A\|_{1,1}^{1/p_0} \|A\|_{\infty,\infty}^{1-1/p_0}. \]
So $A\in LA(n)$ by Corollary~\ref{cor:logaff}.

If $\|A\|_{p,p} \neq \|C\|_{p,p}$ for any $p\in (1,\infty)$, then $\|B\|_{p,p} = \|C\|_{p.p}$ for any $p\in[1,\infty]$ by (1).
So we have $B\in LA(m)$.
\end{proof}

%
%  proposition 4.3
%
\begin{proposition}\label{prop:norms}
Let $A_1, A_2,\dots, A_k \in \M_n(\C)$ and $B, C \in \M_{kn}(\C)$ as follows:
\[   B = \begin{pmatrix} A_1 & 0 & \cdots & 0 \\
                               A_2 & 0 & \cdots & 0 \\
                               \vdots & \vdots & \ddots & \vdots \\
                               A_k & 0 & \cdots & 0 \end{pmatrix}, \quad
     C = \begin{pmatrix} A_1 & A_2 & \cdots & A_k  \\
                               0 & 0 & \cdots & 0 \\
                               \vdots & \vdots & \ddots  & \vdots \\
                               0 & 0 & \cdots & 0 \end{pmatrix} .  \]
Then we have the following:
\begin{enumerate}
  \item[$(1)$] For  $p, q\in [1,\infty]$ with $\frac{1}{p}+\frac{1}{q}=1$,
\[ \|B\|_{p,p} \le (\sum_{i=1}^k \|A_i\|_{p,p}^p)^{1/p}  \text{ and } \|C\|_{p,p} \le ( \sum_{i=1}^k \|A_i\|_{p,p}^{q})^{1/q} .\]
 \item[$(2)$] If there exists $\xi \in \ell^p(n)$ such that $\|\xi\|_p=1$ and $\|A_i\xi\|_p = \|A_i\|_{p,p}$ $(1\le i \le k)$, then it holds
\[   \|B\|_{p,p} = (\sum_{i=1}^k \|A_i\|_{p,p}^p)^{1/p}  .  \]
 \item[$(3)$] If there exists $\eta\in \ell^{q}(n)$ such that $\|\eta\|_{q}=1$ and $\|A_i^*\eta\|_{q}= \|A_i^*\|_{q, q}$ $(1\le i \le k)$, then it holds
\[   \|C\|_{p,p} =(\sum_{i=1}^k \|A_i\|_{p,p}^{q})^{1/q}  .  \]
\end{enumerate}
\end{proposition}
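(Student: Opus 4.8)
The plan is to treat $B$ and $C$ separately, and within each to prove the upper bound of part (1) and then, under the hypothesis of part (2) or (3), exhibit a vector attaining it so that the inequality becomes an equality.

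For the upper bound on $\|B\|_{p,p}$, I would take an arbitrary $\zeta\in\ell^p(kn)$ with $\|\zeta\|_p=1$ and decompose it as $\zeta=\xi_1\oplus\xi_2\oplus\cdots\oplus\xi_k$ with $\xi_j\in\ell^p(n)$. Because only the first block-column of $B$ is nonzero, $B\zeta=(A_1\xi_1)\oplus(A_2\xi_1)\oplus\cdots\oplus(A_k\xi_1)$ depends only on $\xi_1$, and $\|\xi_1\|_p\le\|\zeta\|_p=1$. Hence
\[
   \|B\zeta\|_p^p=\sum_{i=1}^k\|A_i\xi_1\|_p^p\le\sum_{i=1}^k\|A_i\|_{p,p}^p\,\|\xi_1\|_p^p\le\sum_{i=1}^k\|A_i\|_{p,p}^p,
\]
which gives the first bound in (1) (with the usual reading for $p=\infty$, replacing the sum by a max). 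For (2), if $\xi$ is the common norming vector with $\|\xi\|_p=1$, plug $\zeta=\xi\oplus 0\oplus\cdots\oplus 0$ into the same computation: every inequality above becomes an equality, so $\|B\|_{p,p}\ge(\sum_i\|A_i\|_{p,p}^p)^{1/p}$, and combined with (1) we get equality.

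For $C$, the natural route is duality. Note $C^*\in\M_{kn}(\C)$ has the same shape as the matrix $B$ but with blocks $A_i^*$ stacked in the first block-column, so $\|C\|_{p,p}=\|C^*\|_{q,q}$ by the duality identity $\|A\|_{p_1,p_2}=\|A^*\|_{q_2,q_1}$ established in the Preliminaries. Applying the $B$-analysis already done, but with exponent $q$ and matrices $A_i^*$, yields
\[
   \|C\|_{p,p}=\|C^*\|_{q,q}\le\Bigl(\sum_{i=1}^k\|A_i^*\|_{q,q}^{q}\Bigr)^{1/q}=\Bigl(\sum_{i=1}^k\|A_i\|_{p,p}^{q}\Bigr)^{1/q},
\]
using $\|A_i^*\|_{q,q}=\|A_i\|_{p,p}$ once more; this is the second bound in (1). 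For (3), a common norming vector $\eta$ for the $A_i^*$ in $\ell^q(n)$ is exactly a common norming vector for $C^*$ in the sense needed for the equality case of (2) applied to $C^*$, so $\|C^*\|_{q,q}=(\sum_i\|A_i^*\|_{q,q}^{q})^{1/q}$, which transports back to the claimed formula for $\|C\|_{p,p}$.

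The only genuine care-points, rather than obstacles, are the endpoint cases $p=1$ and $p=\infty$: when $p=\infty$ the "$\ell^p$-sum" $(\sum\|A_i\|^p)^{1/p}$ must be read as $\max_i\|A_i\|_{\infty,\infty}$ and $q=1$, and one should check the H\"older/duality steps degenerate correctly (the $C$ case at $p=1$ uses $q=\infty$, i.e.\ $\|C\|_{1,1}=\max_i\|A_i^*\|_{\infty,\infty}$, consistent with Lemma~\ref{lem:boundaries}). I would also note in passing that a norming vector as in (2) or (3) always exists when $p\in(1,\infty)$ for a single matrix by compactness of the unit sphere, so the content of the hypotheses is the word \emph{common}; no existence argument is needed here since the hypothesis is assumed. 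Everything else is the bookkeeping of block decompositions and the already-proved additivity $\|\oplus_i\zeta_i\|_p^p=\sum_i\|\zeta_i\|_p^p$.
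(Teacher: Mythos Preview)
Your proof is correct and follows essentially the same route as the paper: bound $\|B\zeta\|_p$ directly using that $B\zeta$ depends only on $\xi_1$, attain the bound in (2) with $\zeta=\xi\oplus 0\oplus\cdots\oplus 0$, and handle $C$ by observing $C^*$ has the shape of $B$ with blocks $A_i^*$ so that duality $\|C\|_{p,p}=\|C^*\|_{q,q}$ reduces (1) and (3) for $C$ to the results already proved for $B$. Your additional remarks on the endpoint cases $p\in\{1,\infty\}$ and on the word ``common'' in the hypotheses go slightly beyond what the paper writes out, but they are helpful and do not change the argument.
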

\begin{proof}
(1)  Let $\zeta = \xi_1\oplus \xi_2\oplus \cdots \oplus \xi_k \in \ell^p(kn)$ with $\|\zeta\|_p=(\sum_{i=1}^k \|\xi_i\|_p^p)^{1/p}=1$.
Since
\begin{align*}
  \| B\zeta \|_p & = \| A_1\xi_1 \oplus A_2\xi_1 \oplus \cdots \oplus A_k\xi_1 \|_p  \\
   & = (\sum_{i=1}^k \|A_i\xi_1\|_p^p)^{1/p}  \le (\sum_{i=1}^k \|A_i\|_{p,p}^p)^{1/p},
\end{align*}
we have $\|B\|_{p,p} \le  (\sum_{i=1}^k \|A_i\|_{p,p}^p)^{1/p}$.

We remark that $\|C\|_{p,p}= \|C^*\|_{q, q}$ and 
$C^* = \begin{pmatrix} A_1^* & 0 & \cdots & 0 \\ A_2^* & 0 & \cdots & 0 \\ \vdots & \vdots & \ddots & \vdots \\ A_k^* & 0 & \cdots & 0 \end{pmatrix}$.
Then we have
\[  \|C\|_{p,p}  = \|C^*\|_{q, q} \le (\sum_{i=1}^k \|A_i^*\|_{q, q}^{q})^{1/q} =  (\sum_{i=1}^k \|A_i\|_{p,p}^{q})^{1/q}.  \]

(2)  Put $\zeta=\xi\oplus 0 \oplus \cdots \oplus 0 \in \ell^p(kn)$. 
Then we have $\|\zeta\|_p=1$ and
\[  \|B\zeta\|_p = (\sum_{i=1}^k \|A_i\xi\|_p^p)^{1/p}  = (\sum_{i=1}^k \|A_i\|_{p,p}^p)^{1/p},  \]
This means $\|B\|_{p,p} =  (\sum_{i=1}^k \|A_i\|_{p,p}^p)^{1/p}$.

(3)  By (2), $\|C^*\|_{q, q} = (\sum_{i=1}^k \|A_i^*\|_{q, q}^{q})^{1/q}$.
This implies 
\[     \|C\|_{p,p} = (\sum_{i=1}^k \|A_i\|_{p,p}^{q})^{1/q}  \]
by the calculation in (1). 
\end{proof}

%
%  corollary 4.4
%
\begin{corollary}
Let $A_{ij}\in \M_n(\C)$ $(i=1,2,\ldots,k, j=1,2,\ldots,l)$ and
\[   B = \begin{pmatrix} A_{11} & \cdots & A_{1l} \\ \vdots & & \vdots \\ A_{k1} & \cdots & A_{kl} \end{pmatrix} \in \M_{kn, ln}(\C) \subset \M_{\max\{k,l\}n}(\C). \]
Then, for $p,q\in [1,\infty]$ with $\frac{1}{p}+\frac{1}{q}=1$,
\[  \|B\|_{p,p} \le \min \{ (\sum_{i=1}^k(\sum_{j=1}^l \|A_{ij}\|_{p,p}^{q})^{p/q})^{1/p}, (\sum_{j=1}^l(\sum_{i=1}^k \|A_{ij}\|_{p,p}^p)^{q/p})^{1/q} \}.  \]
\end{corollary}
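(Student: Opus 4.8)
The plan is to prove the bound directly from the block action of $B$ on a vector, combined with Hölder's inequality, exactly in the spirit of Proposition~\ref{prop:norms}; the two terms in the minimum will come, respectively, from the estimate itself and from passing to the adjoint. First I would fix an arbitrary $\zeta=\xi_1\oplus\xi_2\oplus\cdots\oplus\xi_l\in\ell^p(ln)$ with $\xi_j\in\ell^p(n)$ and $\|\zeta\|_p=(\sum_{j=1}^l\|\xi_j\|_p^p)^{1/p}=1$. Reading off the block multiplication gives $B\zeta=\bigoplus_{i=1}^k\bigl(\sum_{j=1}^l A_{ij}\xi_j\bigr)$, so that
\[
  \|B\zeta\|_p^p=\sum_{i=1}^k\Bigl\|\sum_{j=1}^l A_{ij}\xi_j\Bigr\|_p^p
    \le\sum_{i=1}^k\Bigl(\sum_{j=1}^l\|A_{ij}\|_{p,p}\,\|\xi_j\|_p\Bigr)^{p}.
\]
For each fixed $i$, Hölder's inequality for the conjugate exponents $q$ and $p$ yields $\sum_{j=1}^l\|A_{ij}\|_{p,p}\|\xi_j\|_p\le(\sum_{j=1}^l\|A_{ij}\|_{p,p}^{q})^{1/q}(\sum_{j=1}^l\|\xi_j\|_p^p)^{1/p}=(\sum_{j=1}^l\|A_{ij}\|_{p,p}^{q})^{1/q}$. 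Substituting this back and taking the supremum over all such $\zeta$ produces the first term of the minimum.

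The second term I would obtain by duality. Since $\|B\|_{p,p}=\|B^*\|_{q,q}$ and the conjugate transpose of a block matrix is again a block matrix, $B^*$ being the $l\times k$ array whose $(j,i)$ block equals $A_{ij}^*$, the estimate already established, applied to $B^*$ with the exponent $q$ in place of $p$ and with the roles of $k$ and $l$ interchanged, gives $\|B^*\|_{q,q}\le(\sum_{j=1}^l(\sum_{i=1}^k\|A_{ij}^*\|_{q,q}^{p})^{q/p})^{1/q}$. Using $\|A_{ij}^*\|_{q,q}=\|A_{ij}\|_{p,p}$ and $\|B\|_{p,p}=\|B^*\|_{q,q}$ turns this into the second term, and taking the smaller of the two completes the proof. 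Alternatively, one can read the first bound as an iterated application of Proposition~\ref{prop:norms}: each block row is an operator of the type $C$ there, and $B$ stacks these rows in the manner of the operator $B$ there.

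There is no genuine difficulty here; the points needing attention are purely bookkeeping. One should note at the outset that the identification $B\in\M_{kn,ln}(\C)\subset\M_{\max\{k,l\}n}(\C)$ leaves the operator norm unchanged --- this is precisely the reduction to $\tilde B$ discussed in Section~4 --- so the computation may be performed on the honest rectangular operator $\ell^p(ln)\to\ell^p(kn)$. One should also check the endpoints $p\in\{1,\infty\}$ (so $q\in\{\infty,1\}$): there the inner $\ell^q$- or $\ell^p$-sum over a block row or column is interpreted as a maximum, Hölder's inequality degenerates to $\sum_j a_jb_j\le(\max_j a_j)\sum_j b_j$, and both displayed bounds remain valid verbatim.
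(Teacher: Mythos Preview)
Your proof is correct and follows essentially the same approach as the paper: the paper obtains the first bound by viewing $B$ as a stack of block rows $C_i$ and applying Proposition~\ref{prop:norms} twice (to each $C_i$ and then to the stack), which is exactly what your direct computation unwinds, and you yourself note this alternative reading. For the second bound the paper instead decomposes $B$ into block columns $D_j$ and again applies Proposition~\ref{prop:norms} twice, whereas you dualize the whole matrix; since the row-type estimate in Proposition~\ref{prop:norms} is itself proved by duality, the two routes are equivalent.
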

\begin{proof}
Put
\[  C_i =\begin{pmatrix} A_{i1} & \cdots & A_{il} \end{pmatrix} \in \M_{n, ln}(\C) \subset \M_{ln}(\C), \; i=1,2,\ldots,k.  \]
Then we can see
\[  B = \begin{pmatrix} C_1 \\ \vdots \\ C_k \end{pmatrix}  . \]
By Proposition~\ref{prop:norms} we have
\[  \|C_i\|_{p,p} \le (\sum_{j=1}^l \|A_{ij}\|_{p,p}^{q})^{1/q}   \]
and
\[  \|B\|_{p,p} \le (\sum_{i=1}^k \|C_i\|_{p,p}^p)^{1/p} \le (\sum_{i=1}^k(\sum_{j=1}^l \|A_{ij}\|_{p,p}^{q})^{p/q})^{1/p}.  \]

We also put
\[  D_j= \begin{pmatrix} A_{1j} \\ \vdots \\ A_{kj} \end{pmatrix} \in \M_{kn,n}(\C)\subset \M_{kn}(\C), \; j=1,2,\ldots,l, \]
and
\[  B = \begin{pmatrix} D_1 & \cdots &D_l \end{pmatrix}.  \]
Then we have
\[  \|D_j\|_{p,p} \le (\sum_{i=1}^k \|A_{ij}\|_{p,p}^p)^{1/p} ,  \]
and
\[  \|B\|_{p,p} \le (\sum_{j=1}^l \|D_j\|_{p,p}^{q})^{1/q} = (\sum_{j=1}^l(\sum_{i=1}^k \|A_{ij}\|_{p,p}^p)^{q/p})^{1/q} .  \]
So we can get the desired result.
\end{proof}

For a vector $\xi=\begin{pmatrix} x_1 \\ \vdots \\ x_n \end{pmatrix} \in \C^n$, we define maps $C,R: \C^n \rightarrow M_n(\C)$ as follows:
\[   C(\xi) =\begin{pmatrix} x_1 & 0 & \cdots & 0 \\ \vdots & \vdots & \cdots & \vdots \\ x_n & 0 & \cdots & 0 \end{pmatrix}, \quad
     R(\xi) = \begin{pmatrix} \bar{x_1} & \cdots & \bar{x_n} \\ 0 & \cdots & 0 \\ \vdots & \vdots & \vdots \\ 0 & \cdots & 0 \end{pmatrix}. \]
Then we have, by Proposition~\ref{prop:norms},
\[  \|C(\xi)\|_{p,p} = \|\xi\|_p  \text{ and } \|R(\xi)\|_{p,p} = \|\xi\|_{q} \qquad (\frac{1}{p}+\frac{1}{q}=1). \]

%
%  lemma 4.5
%
\begin{lemma}
For $\xi=\begin{pmatrix} x_1 \\ \vdots \\ x_n \end{pmatrix} \in \C^n$, the following are equivalent:
\begin{enumerate}
  \item[$(1)$] $C(\xi) \in LA(n)$.
  \item[$(2)$] $R(\xi) \in LA(n)$.
  \item[$(3)$] For any $i, j$,  $x_ix_j\neq 0$ implies $|x_i|=|x_j|$.
\end{enumerate}
\end{lemma}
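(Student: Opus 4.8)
The plan is to reduce both statements to a single scalar identity. Recall from the discussion preceding the lemma that $\|C(\xi)\|_{p,p}=\|\xi\|_p$ and $\|R(\xi)\|_{p,p}=\|\xi\|_q$ (with $\tfrac1p+\tfrac1q=1$), and that by Lemma~\ref{lem:boundaries} the boundary norms of $C(\xi)$ are $\|C(\xi)\|_{1,1}=\|\xi\|_1$ and $\|C(\xi)\|_{\infty,\infty}=\|\xi\|_\infty$. Hence $C(\xi)\in LA(n)$ is literally the assertion that $\|\xi\|_p=\|\xi\|_1^{1/p}\|\xi\|_\infty^{1-1/p}$ for every $p\in[1,\infty]$, and similarly $R(\xi)\in LA(n)$ unwinds to $\|\xi\|_q=\|\xi\|_\infty^{1/p}\|\xi\|_1^{1-1/p}$ for every $p$.

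First I would dispatch (1)$\iff$(2) by duality. Since $\overline{{}^tC(\xi)}$ has first row $\overline{x_1},\dots,\overline{x_n}$ and all other entries zero, one has $R(\xi)=C(\xi)^*$. More generally, for any $A\in\M_n(\C)$ I claim $A\in LA(n)\iff A^*\in LA(n)$: using $\|A^*\|_{p,p}=\|A\|_{q,q}$ (established in Section~2) together with $\|A^*\|_{1,1}=\|A\|_{\infty,\infty}$ and $\|A^*\|_{\infty,\infty}=\|A\|_{1,1}$, the logarithmic-affine equality for $A^*$ at exponent $p$ becomes, after the substitution $\tfrac1p=1-\tfrac1q$, exactly the logarithmic-affine equality for $A$ at exponent $q$; and $q$ ranges over $[1,\infty]$ as $p$ does, so the two conditions coincide. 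Applying this with $A=C(\xi)$ gives (1)$\iff$(2), and it remains to prove (1)$\iff$(3).

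For (3)$\Rightarrow$(1): the case $\xi=0$ is trivial, and otherwise, writing $k:=\#\supp(\xi)$ and $M:=\|\xi\|_\infty$, condition (3) says every nonzero $|x_i|$ equals $M$, so $\|\xi\|_1=kM$ and $\|\xi\|_p=k^{1/p}M=(kM)^{1/p}M^{1-1/p}$ for all $p$, which is precisely the logarithmic-affine identity for $C(\xi)$. For the converse (1)$\Rightarrow$(3), Corollary~\ref{cor:logaff} lets me use the equality at a single exponent $p_0\in(1,\infty)$; raising $\|\xi\|_{p_0}=\|\xi\|_1^{1/p_0}\|\xi\|_\infty^{1-1/p_0}$ to the $p_0$-th power yields $\sum_i|x_i|^{p_0}=\bigl(\sum_i|x_i|\bigr)\|\xi\|_\infty^{p_0-1}$, which I then compare termwise with the elementary bound $|x_i|^{p_0}\le |x_i|\,\|\xi\|_\infty^{p_0-1}$. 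For $p_0>1$ equality in the $i$-th term forces $x_i=0$ or $|x_i|=\|\xi\|_\infty$, so equality of the two sums forces every nonzero entry to have modulus $\|\xi\|_\infty$, i.e. (3). The only places needing care are the $p\leftrightarrow q$ bookkeeping in the duality step (making sure the two $LA$-conditions really are identified) and the degenerate case $\xi=0$; the rest is the equality analysis of a one-line inequality, with Corollary~\ref{cor:logaff} supplying the reduction from "all $p$" to "one $p$".
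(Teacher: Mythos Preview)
Your proof is correct and follows essentially the same route as the paper: duality (via $R(\xi)=C(\xi)^*$) for $(1)\Leftrightarrow(2)$, and a reduction to a single exponent together with the termwise inequality $|x_i|^{p_0}\le |x_i|\,\|\xi\|_\infty^{p_0-1}$ for $(1)\Leftrightarrow(3)$. The only cosmetic difference is that the paper picks the concrete exponent $p_0=2$, turning the key identity into $\sum_i|x_i|^2=(\sum_i|x_i|)(\max_i|x_i|)$, whereas you leave $p_0$ generic; also, your citation of Corollary~\ref{cor:logaff} in the $(1)\Rightarrow(3)$ direction is superfluous (the equality at $p_0$ is already given by the LA hypothesis), though harmless.
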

\begin{proof}
$(1)\Leftrightarrow(2)$ It easily follows from the fact $\|C(\xi)\|_{p,p}=\|C(\xi)^*\|_{q, q}$, where $\frac{1}{p}+\frac{1}{q}=1$. 

$C(\xi)\in LA(n)$ $\Leftrightarrow$ $\|C(\xi)\|_{2,2} =\|C(\xi)\|_{1,1}^{1/2} \|C(\xi)\|_{\infty,\infty}^{1/2} $ (by Corollary~\ref{cor:logaff}) 
\[ \Leftrightarrow \sum_{i=1}^n |x_i|^2 = (\sum_{i=1}^n |x_i|) (\max_{1\le i \le n}|x_i| ) \Leftrightarrow (x_ix_j\neq 0 \text{ implies } |x_i|=|x_j| ). \]
So we have $(1)\Leftrightarrow (3)$.
\end{proof}

%
%  theorem 4.6
%
\begin{theorem}
Let $\alpha = \begin{pmatrix} a_1 \\ \vdots \\ a_n \end{pmatrix}$,  $\beta = \begin{pmatrix} b_1 \\ \vdots \\ b_n \end{pmatrix}\in \C^n$, and $A\in \M_n(\C)$.
The matrix $B\in \M_{n^2}(\C) = \M_n(\M_n(\C))$ is defined as follows:
\[   B=\begin{pmatrix} a_1\bar{b_1}A & \cdots & a_1\bar{b_n}A \\ \vdots & \ddots & \vdots \\ a_n\bar{b_1}A & \cdots & a_n\bar{b_n}A \end{pmatrix}.  \]
Then we have
\begin{enumerate}
  \item[$(1)$]  $\|B\|_{p,p} = \|\alpha\|_p \|\beta\|_{q} \|A\|_{p,p}$, where $\frac{1}{p}+\frac{1}{q}=1$.
  \item[$(2)$]  $B\in LA(n^2)$ is equivalent to all matrices $C(\alpha), R(\beta), A$ belongs to $LA(n)$.
\end{enumerate}
\end{theorem}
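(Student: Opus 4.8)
The plan is to establish (1) by a direct computation exploiting the block structure of $B$, and then to deduce (2) from (1) together with the general Riesz--Thorin bound $\|M\|_{p,p}\le\|M\|_{1,1}^{1/p}\|M\|_{\infty,\infty}^{1-1/p}$ (Proposition~\ref{prop:key}) and the identities $\|C(\alpha)\|_{p,p}=\|\alpha\|_p$, $\|R(\beta)\|_{p,p}=\|\beta\|_q$ already recorded above. For the upper bound in (1), write a generic vector of $\ell^p(n^2)$ in block form $\zeta=\xi_1\oplus\cdots\oplus\xi_n$ with $\xi_j\in\ell^p(n)$, so that $\|\zeta\|_p=(\sum_j\|\xi_j\|_p^p)^{1/p}$. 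Multiplying out, the $i$-th block of $B\zeta$ equals $a_iAw$, where $w=\sum_{j=1}^n\bar b_j\,\xi_j\in\ell^p(n)$ depends on $\zeta$ but not on $i$; hence $\|B\zeta\|_p=\|\alpha\|_p\,\|Aw\|_p$ (with the evident reading when $p=\infty$). Now $\|Aw\|_p\le\|A\|_{p,p}\|w\|_p$, and by the triangle inequality and Hölder's inequality $\|w\|_p\le\sum_j|b_j|\,\|\xi_j\|_p\le\|\beta\|_q\,\|\zeta\|_p$; combining these gives $\|B\|_{p,p}\le\|\alpha\|_p\|\beta\|_q\|A\|_{p,p}$.

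For the reverse inequality I would produce an optimal test vector. Fix a unit $\xi\in\ell^p(n)$ with $\|A\xi\|_p=\|A\|_{p,p}$ (the supremum is attained because $n<\infty$) and set $\zeta=c_1\xi\oplus\cdots\oplus c_n\xi$, choosing the scalars $c_j$ so as to force equality both in the triangle inequality for $w$ and in Hölder's inequality, i.e. $\bar b_jc_j\ge0$ and $|c_j|^p$ proportional to $|b_j|^q$; concretely $c_j=|b_j|^{q/p}b_j/|b_j|$ when $b_j\neq0$ and $c_j=0$ otherwise. Then $w=\|\beta\|_q^q\,\xi$ and $\|\zeta\|_p=\|\beta\|_q^{q/p}$, and using $q/p=q-1$ one computes $\|B\zeta\|_p/\|\zeta\|_p=\|\alpha\|_p\|\beta\|_q\|A\|_{p,p}$. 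The endpoints are handled by the degenerate versions of this choice (for $p=1$, $q=\infty$, support $\zeta$ on a single index realizing $\max_j|b_j|$; for $p=\infty$, $q=1$, take all $c_j$ of modulus one with phases aligned). I expect this lower bound — specifically getting the equality cases of Hölder's inequality and the phase alignment right, uniformly in $p$ and including the endpoints — to be the only genuinely delicate point; everything else is bookkeeping of the block form.

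Assuming $\alpha$, $\beta$ and $A$ are all nonzero (so that every norm below is positive), part (2) is then formal. Taking $p=1$ and $p=\infty$ in (1) gives $\|B\|_{1,1}=\|\alpha\|_1\|\beta\|_\infty\|A\|_{1,1}$ and $\|B\|_{\infty,\infty}=\|\alpha\|_\infty\|\beta\|_1\|A\|_{\infty,\infty}$, and since $\|C(\alpha)\|_{1,1}=\|\alpha\|_1$, $\|C(\alpha)\|_{\infty,\infty}=\|\alpha\|_\infty$, $\|R(\beta)\|_{1,1}=\|\beta\|_\infty$, $\|R(\beta)\|_{\infty,\infty}=\|\beta\|_1$, the condition $\|B\|_{p,p}=\|B\|_{1,1}^{1/p}\|B\|_{\infty,\infty}^{1-1/p}$ is equivalent to
\[
  \frac{\|C(\alpha)\|_{p,p}}{\|C(\alpha)\|_{1,1}^{1/p}\|C(\alpha)\|_{\infty,\infty}^{1-1/p}}\cdot\frac{\|R(\beta)\|_{p,p}}{\|R(\beta)\|_{1,1}^{1/p}\|R(\beta)\|_{\infty,\infty}^{1-1/p}}\cdot\frac{\|A\|_{p,p}}{\|A\|_{1,1}^{1/p}\|A\|_{\infty,\infty}^{1-1/p}}=1 .
\]
Each of the three factors is $\le1$ by Proposition~\ref{prop:key}, so a product of positive numbers each $\le 1$ can equal $1$ for every $p\in[1,\infty]$ only if each factor equals $1$ for every $p$; that is (using Corollary~\ref{cor:logaff} if one wishes to test only a single $p_0\in(1,\infty)$), $B\in LA(n^2)$ holds if and only if $C(\alpha)$, $R(\beta)$ and $A$ all lie in $LA(n)$. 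Conversely, if all three lie in $LA(n)$ then each factor is identically $1$, so the displayed identity holds for all $p$ and $B\in LA(n^2)$.
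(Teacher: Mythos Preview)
Your proof is correct. For part (1) you take a more direct route than the paper: rather than invoking Proposition~\ref{prop:norms} twice (first for each block row $C_i=(a_i\bar b_1A,\ldots,a_i\bar b_nA)$, then for the column of $C_i$'s), you compute $B\zeta$ in one step, observing that its $i$-th block is $a_iAw$ with $w=\sum_j\bar b_j\xi_j$ independent of $i$. The resulting factorisation $\|B\zeta\|_p=\|\alpha\|_p\|Aw\|_p$ makes both the upper bound (via H\"older on $\|w\|_p$) and the lower bound (via an explicit test vector realising the H\"older equality case) transparent, and spares you having to verify the common-norming-vector hypotheses (2) and (3) of Proposition~\ref{prop:norms}. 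The paper's route has the virtue of illustrating that proposition; yours is self-contained and exposes the rank-one-in-blocks structure of $B$ more directly. For part (2) your argument and the paper's coincide: both factor the ratio $\|B\|_{p,p}\big/\bigl(\|B\|_{1,1}^{1/p}\|B\|_{\infty,\infty}^{1-1/p}\bigr)$ as a product of three factors each $\le1$ by Riesz--Thorin, and conclude via Corollary~\ref{cor:logaff} (the paper specialises to $p_0=2$; you leave $p$ general, which is harmless).
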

\begin{proof}
(1)  Put $C_i = \begin{pmatrix} a_i\bar{b_1}A & \cdots & a_i\bar{b_n}A \end{pmatrix} \in \M_{n,n^2}(\C) \subset \M_{n^2}(\C)$ $(1\le i \le n)$.
Since $\{ a_i\bar{b_1}A, \ldots, a_i\bar{b_n}A \}$ satisfies the condition (3) in Proposition~\ref{prop:norms},
\[   \|C_i\|_{p,p} =( \sum_{j=1}^n \| a_i \bar{b_j} A\|_{p,p}^{q} )^{1/q} =  |a_i| \|\beta\|_{q} \|A\|_{p,p} .  \]
Also $\{ C_1, \ldots C_n \}$ satisfies the condition (2) in Proposition~\ref{prop:norms},
\[   \|B\|_{p,p} = (\sum_{i=1}^n \|C_i\|_{p,p}^p)^{1/p} = (\sum_{i=1}^n (|a_i| \|\beta\|_{q} \|A\|_{p,p})^p)^{1/p} =\|\alpha\|_p \|\beta\|_{q} \|A\|_{p,p}.  \]

(2) We remark that, by Proposition~\ref{prop:riesz},
\begin{gather*}
  \|\alpha\|_2 \le (\|\alpha\|_1 \alpha\|_\infty)^{1/2}  \\
  \|\beta\|_2 \le (\|\beta\|_1 \|\beta\|_\infty)^{1/2}  \\
  \|A\|_{2,2} \le (\|A\|_{1,1} \|A\|_{\infty, \infty})^{1/2}  \\
  \|B\|_{2,2} \le (\|B\|_{1,1} \|B\|_{\infty, \infty})^{1/2}. 
\end{gather*}
By (1),  
\begin{align*}
   \|B\|_{2,2} & =\|\alpha\|_2 \|\beta\|_2 \|A\|_{2,2} \le  (\|B\|_{1,1} \|B\|_{\infty, \infty})^{1/2}  \\
   & = (\|\alpha\|_1 \|\alpha\|_\infty)^{1/2} (\|\beta\|_\infty \|\beta\|_1)^{1/2} (\|A\|_{1,1} \|A\|_{\infty, \infty})^{1/2}.
\end{align*}
So we have that $B\in LA(n^2)$ $\Leftrightarrow$ $\|B\|_{2,2} = (\|B\|_{1,1} \|B\|_{\infty, \infty})^{1/2}$ 
\begin{align*}
  \Leftrightarrow & \; \|\alpha\|_2  = (\|\alpha\|_1 \alpha\|_\infty)^{1/2},  
  \|\beta\|_2 = (\|\beta\|_1 \|\beta\|_\infty)^{1/2}  \\
     & \qquad  \qquad \text{ and }
  \|A\|_{2,2} = (\|A\|_{1,1} \|A\|_{\infty, \infty})^{1/2}  \\
   \Leftrightarrow & \; C(\alpha), R(\beta), A\in LA(n).
\end{align*}
\end{proof}

\noindent
{\bf Example}(4)  Let $A=\begin{pmatrix} 1 & 3 \\ 3 & 1 \end{pmatrix}$. 
By Example(1), $\|A\|_{p,p} = 4$ for any $p\in [1,\infty]$. 
In particular, $A\in LA(2)$.

For $\alpha= \begin{pmatrix} 1 \\ -1 \end{pmatrix}$ and $\beta=\begin{pmatrix} 1 \\ 2 \end{pmatrix}$, we consider
\[  B = \begin{pmatrix} (1\cdot 1) A &  (1\cdot 2) A \\ (-1\cdot 1)A & (-1\cdot 2)A \end{pmatrix}
       = \begin{pmatrix} 1 & 3 & 2 & 6 \\ 3 & 1 & 6 & 2 \\ -1 & -3 & -2 & -6 \\ -3 & -1 & -6 & -2 \end{pmatrix}.  \]
By Theorem 4.6 and Lemma 4.5, we have $\| B\|_{p,p}= 4\cdot 2^{1/p}\cdot (1+2^{1-1/p})^{1-1/p}$ and $B\notin LA(4)$ because
$R(\beta)\notin LA(2)$.

For $\alpha= \begin{pmatrix} 1 \\ i \\ 0 \end{pmatrix}$ and $\beta=\begin{pmatrix} 1 \\ -1 \\ i \end{pmatrix}$, we consider
\begin{align*}
   B & = \begin{pmatrix} (1\cdot 1) A &  (1\cdot (-1)) A & (1\cdot (-i))A \\ (i \cdot 1)A & (i \cdot (-1))A & (i\cdot (-i) A \\ 0\cdot A & 0 \cdot A & 0 \cdot A \end{pmatrix} \\
      & = \begin{pmatrix} 1 & 3 & -1 & -3 & i & -3i \\ 3 & 1 & -3 & -1 & -3i & -i \\ i & 3i & -i & -3i & 1 & 3 \\ 3i & i & -3i & -i & 3 & 1 \\
            0 & 0 & 0 & 0 & 0 & 0 \\ 0 & 0 & 0 & 0 & 0 & 0  \end{pmatrix}. 
\end{align*}
By Theorem 4.6 and Lemma 4.5, we have $\| B\|_{p,p}= 4\cdot 2^{1/p}\cdot 3^{1-1/p}$ and $B\in LA(6)$ because
$C(\alpha), R(\beta)\in LA(3)$ and $A\in LA(2)$.

%%%
%%%

\end{document}